\newcommand{\N}{\mathbb{N}}
\newcommand{\Z}{\mathbb{Z}}
\newcommand{\C}{\mathbb{C}}
\newcommand{\SSYT}{{\rm SSYT}}
\newcommand{\Rect}{{\rm Rect}}
\newcommand{\Sym}{{\rm Sym}}
\newcommand{\shape}{{\rm shape}}
\newtheorem{thms}{Theorem}[section]
\newtheorem{lem}[thms]{Lemma}
\newtheorem{cor}[thms]{Corollary}
\newtheorem{prop}[thms]{Proposition}
\theoremstyle{definition}
\newtheorem{dfn}[thms]{Definition}
\newtheorem{ex}[thms]{Example}
\newtheorem{rmk}[thms]{Remark}
\author{Hikari Hanaki}
\address{Mathematical Institute, Tohoku University, 6-3 Aramaki Aza-Aoba, Aoba-ku, Sendai, 980-8578, Japan.}
\email{hanaki.hikari.s8@dc.tohoku.ac.jp}
\thanks{}
\subjclass[2020]{Primary 11M32, 05E05; Secondary 11M41}
\date{}
\dedicatory{}
\keywords{Schur muliple zeta function, multiple zeta function, Young tableau}
\begin{document}

\allowdisplaybreaks

\title{The Littlewood-Richardson rule for Schur multiple zeta functions}

\maketitle

\begin{abstract}
The Schur multiple zeta function was defined as a multivariable function by Nakasuji-Phuksuwan-Yamasaki. 
Inspired by the product formula of Schur functions, the products of Schur multiple zeta functions have been studied, for example, by Nakasuji--Takeda and by Nakaoka. 
While a product of two Schur functions expands as a linear combination of Schur functions, it is known that  a similar expansion for the product of Schur multiple zeta functions can be obtained by symmetrizing, i.e., by taking the summation over all permutations of the variables. In this paper, we present a more refined formula by restricting the summation from the full symmetric group to its specific subgroup. 
\end{abstract}

\section{Introduction}

The {\it Schur multiple zeta functions} were introduced by Nakasuji-Phuksuwan-Yamasaki \cite{NPY} as a generalization of both multiple zeta and multiple zeta-star functions of Euler-Zagier type.
These are defined as sums over combinatorial objects called semi-standard Young tableaux, similar to usual Schur functions.
We now review the detailed definition. 

A {\it partition} is a finite sequence 
$\lambda=(\lambda_1, \dots, \lambda_r)$ of non-negative integers in weakly decreasing order: $\lambda_1 \ge \dots \ge \lambda_r$. The number of non-zero $\lambda_i$ in $\lambda=(\lambda_1, \dots, \lambda_r)$ is the {\it length} of $\lambda$, denoted by $l(\lambda)$, and the sum of $\lambda_i$ is the {\it weight} of $\lambda$, denoted by $|\lambda|$. If $|\lambda|=n$, we say that $\lambda$ is a partition of $n$. 
Throughout this paper, let $\lambda, \mu, \nu$ be partitions.

For $\lambda = (\lambda_1, \dots, \lambda_r)$, let $D(\lambda)$ be the subset of $\Z^2$:
$D(\lambda)=\{(i,j)\in\Z^2|1\le i\le r, 1\le j\le \lambda_i\}$
and depicted as a collection of square boxes arranged in left-justified rows with $\lambda_i$ boxes in the $i$-th row. We call this the {\it Young diagram} (or simply diagram) of shape $\lambda$. 
We identify $\lambda$ with this collection of boxes, and we call it {\it shape} $\lambda$ and denote it by the same symbol $\lambda$. 
We say that $(i,j)\in D(\lambda)$ is a {\it corner} of $\lambda$ if $(i+1,j)\notin D(\lambda)$ and $(i,j+1)\notin D(\lambda)$ and we denote by $C(\lambda)\subset D(\lambda)$ the set of all corners of $\lambda$. The {\it conjugate} of a partition $\lambda$ is the partition $\lambda'=(\lambda'_1, \lambda'_2, \dots)$ defined by $\lambda'_j=\#\{i\>|\>\lambda_i\ge j\}$.  

A {\it skew partition} $\lambda/\mu$ is a pair of partitions $\lambda$ and $\mu$ such that $\mu_i \le \lambda_i$ for all $i$. The Young diagram of a skew partition $\lambda/\mu$, denoted by $D(\lambda/\mu)$, is defined by $D(\lambda/\mu) = D(\lambda) \setminus D(\mu)$ and is depicted as the diagram obtained by removing that of $\mu$ from that of $\lambda$. For clarity, sometimes we depict the diagram of $\lambda/\mu$ by coloring the boxes of $\mu$ in black as in Figure \ref{skew_diag}. 
When $\mu = \emptyset$ (i.e., $\mu_i = 0$ for all $i$), we identify skew partition $\lambda/\mu$ with $\lambda$ and call it {\it normal partition}. Unless otherwise stated, partitions are assumed to be non-empty. The corners of $\lambda/\mu$ are defined the same as the case of normal partition.

\begin{figure}
\begin{ytableau}
\none & \none & {} & {} & {}\\
\none & {} & {} \\
{} & {}
\end{ytableau}
=
\begin{ytableau}
*(black!40){} & *(black!40){} & {} & {} & {}\\
*(black!40){} & {} & {} \\
{} & {}
\end{ytableau}
\caption{Example of diagram of skew partition : $\lambda/\mu = (5, 3, 2)/(2, 1)$}
\label{skew_diag}
\end{figure}

Let $X$ be a set. A {\it Young tableau} (or simply tableau) $T=(t_{ij})$ of shape $\lambda$ over $X$ is a filling of $D(\lambda)$ obtained by putting $t_{ij}\in X$ into the $(i,j)$ box of $D(\lambda)$. We denote by $T(\lambda, X)$ the set of all Young tableaux of shape $\lambda$ over $X$. 
We denote the shape of a Young tableau $T$ by $\shape(T)$.

A {\it semi-standard Young tableau} is a Young tableau over the set of positive integers $\N$ such that the entries in each row are weakly increasing from left to right and those in each column are strictly increasing from top to bottom. 
We denote by $\SSYT(\lambda)$ the set of all semi-standard Young tableaux of shape $\lambda$. 
A skew Young tableau and a skew semi-standard Young tableau are defined similarly.
We may denote by $\SSYT$ the set of all (normal or skew) semi-standard Young tableaux for short.

Let $\delta = \lambda/\mu$ (now, including $\mu = \emptyset$).  For a given tableau of variables $\bm{s}=(s_{ij})\in T(\delta, \mathbb{C})$, 
{\it Schur multiple zeta function} of shape $\delta$ is defined as
$$
\zeta_{\delta}(\bm{s})=\sum_{M\in \SSYT(\delta)}\frac{1}{M^{\bm{s}}},
$$
where $M^{\bm{s}}=\displaystyle{\prod_{(i, j)\in D(\delta)}m_{ij}^{s_{ij}} }$ for $M=(m_{ij})\in \SSYT(\delta)$. 
The function $\zeta_\delta(\bm{s})$ converges absolutely in 
$$
 W_{\delta}
=
\left\{\bm{s}=(s_{ij})\in T(\delta,\mathbb{C})\,\left|\,
\begin{array}{l}
 \text{$\Re(s_{ij})\ge 1$ for all $(i,j)\in D(\delta) \setminus C(\delta)$}, \\[3pt]
 \text{$\Re(s_{ij})>1$ for all $(i,j)\in C(\delta)$}
\end{array}
\right.
\right\}.
$$

{\it Schur function} for a normal or skew partition $\delta$ is a symmetric function such that 
$$s_\delta = \sum_{(m_{ij}) \in \SSYT(\delta)} \prod_{(i, j)\in D(\delta)} x_{m_{ij}}.$$
Since the Schur multiple zeta functions have a structure similar to that of the Schur functions, they are expected to have similar properties to those for Schur functions. 
It is known that Schur functions for normal partitions form a $\Z$-basis of the ring of symmetric functions (which is denoted by $\Lambda$ in \cite{M}), and then the {\it Littlewood-Richardson coefficients} $c_{\mu\nu}^\lambda$ are defined by the expansion
\begin{align}\label{LR1}
s_\mu s_\nu=\sum_{\lambda: \rm{partition}}c_{\mu\nu}^\lambda s_\lambda 
\end{align}
for partitions $\lambda, \mu, \nu$. 
Also it holds that 
\begin{align}\label{LR2}
s_{\lambda/\mu}=\sum_{\nu:\text{partition}}c_{\mu\nu}^\lambda s_\nu
\end{align}
for skew Schur functions. 
For the product of Schur multiple zeta functions $\zeta_{\mu}(\bm{s})$ and $\zeta_{\nu}(\bm{t})$, Nakaoka \cite{N} showed that an analogue of (1) can be obtained by taking the summation over $\Sym(\bm{s} \ast \bm{t})$, which is the symmetric group permuting all the  variables $\{s_{ij}\} \cup \{t_{ij}\}$. 

In this paper, we obtain more refined formula by restricting the $\Sym(\bm{s} \ast \bm{t})$ to its subgroup $\Sym(B(\bm{s} \ast \bm{t}))$. 
To restrict the symmetric group, we use a combinatorial rule for computing the Littlewood-Richardson coefficients.

\begin{thms}\label{main}
Let $\mu, \nu$ be partitions, and let $\mu', \nu'$ be the conjugates of $\mu, \nu$, respectively. 
Let $\bm{s}=(s_{ij})\in T(\mu, \mathbb{C}), \bm{t}=(t_{ij})\in T(\nu,\mathbb{C})$, and assume that the entries are variables. 
Assume that the real parts of the variables
$s_{i1}$ with $\mu'_2+\nu'_1\le i\le\mu'_1 - 1$and $t_{1j}$ with $\nu_2+\mu_1\le j\le \nu_1 - 1$
are greater than or equal to 1 and the real parts of all other variables are greater than 1.
Then the following equality holds: 
for any $(\bm{u}_{\lambda}(\bm{s} \ast \bm{t})\in U_\lambda(\bm{s} \ast \bm{t}))_{\lambda\in\mathcal{G}(\mu \ast \nu)}$,

\begin{align*}
\sum_{\Sym(B(\bm{s} \ast \bm{t}))}\zeta_{\mu}(\bm{s})\zeta_{\nu}(\bm{t})=\sum_{\Sym(B(\bm{s} \ast \bm{t}))}\sum_{\lambda\in\mathcal{G}(\mu \ast \nu)}c_{\mu\nu}^{\lambda}\zeta_{\lambda}(\bm{u}_{\lambda}(\bm{s} \ast \bm{t})).
\end{align*}
Here $\sum_{\Sym(B(\bm{s} \ast \bm{t}))}$ is the symmetric group permuting all the variables except  
$\{s_{i1}\>|\>\mu'_2+\nu'_1\le i\le\mu'_1\}\sqcup\{t_{1j}\>|\>\nu_2+\mu_1\le j\le \nu_1\}.$
\end{thms}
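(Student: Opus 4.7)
The plan is to build on the symmetrized Littlewood--Richardson identity of Nakaoka~\cite{N} (summing over the full $\Sym(\bm{s}\ast\bm{t})$) and to show that the same identity actually descends to the smaller group $\Sym(B(\bm{s}\ast\bm{t}))$, i.e.\ that the variables $s_{i1}$ with $\mu'_2+\nu'_1\le i\le\mu'_1$ and $t_{1j}$ with $\nu_2+\mu_1\le j\le\nu_1$ do not need to be averaged. Concretely, I would expand the left-hand side as
$$\zeta_\mu(\bm{s})\zeta_\nu(\bm{t})=\sum_{(M,N)\in\SSYT(\mu)\times\SSYT(\nu)}\frac{1}{M^{\bm{s}}N^{\bm{t}}},$$
apply the classical bijection (column insertion / jeu de taquin) that sends each pair $(M,N)$ to a pair $(P,Q)$ with $P\in\SSYT(\lambda)$ and $Q$ an LR-tableau of shape $\lambda/\mu$ with content $\nu$ (so that the multiplicity of $\lambda$ is exactly $c_{\mu\nu}^{\lambda}$), and then track how the exponent tableau $\bm{s}\ast\bm{t}$ is transported to a tableau $\bm{u}_\lambda(\bm{s}\ast\bm{t})\in U_\lambda(\bm{s}\ast\bm{t})$ on $D(\lambda)$.

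Second, to identify exactly which variables must be frozen, I would examine what the bijection does to the first column of $\mu$ and the first row of $\nu$. Because $Q$ is an LR-tableau, the Yamanouchi / reverse-lattice condition forces the entries $1$ of $Q$ (coming from the first row of $N$) to occupy a very specific block of cells of $\lambda/\mu$ determined only by $\lambda$, and dually the first column of $M$ must fill the rows of $\lambda$ below the conjugate boundary in a way depending only on $\lambda'$. In the range $\mu'_2+\nu'_1\le i\le\mu'_1$ (respectively $\nu_2+\mu_1\le j\le\nu_1$) the destination cell in $\lambda$ is rigid: it is the same cell for every $(M,N)$ contributing to that $\lambda$. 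Hence after the bijection the exponents $s_{i1}$ and $t_{1j}$ in this range are assigned to canonical corner cells of every $\lambda\in\mathcal{G}(\mu\ast\nu)$ and already agree on both sides of the equation without any permutation.

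Third, for the remaining variables (those in $B(\bm{s}\ast\bm{t})$), the bijection does shuffle them in a way that depends on $(M,N)$, so one recovers equality of the multi-Dirichlet series only after symmetrizing. Averaging both sides by $\sigma\in\Sym(B(\bm{s}\ast\bm{t}))$ produces, on the right, the sum $\sum_{\lambda\in\mathcal{G}(\mu\ast\nu)}c_{\mu\nu}^{\lambda}\zeta_\lambda(\bm{u}_\lambda)$ averaged in the same way, and on the left the symmetrized product. The convergence is immediate: the hypothesis $\Re(s_{ij})>1$ off the designated strips and $\Re(s_{ij})\ge 1$ on them is preserved by the bijection, since corners of $\lambda$ are mapped from strict-inequality cells and interior cells of $\lambda$ receive only $\Re\ge 1$ exponents, placing $\bm{u}_\lambda$ inside $W_\lambda$.

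The main obstacle, and the part I would spend the most care on, is the rigidity claim in step two: namely, that the LR bijection genuinely fixes the target cells of the extremal first-column entries of $M$ and first-row entries of $N$ for every admissible $(M,N,\lambda)$. This is precisely where the bounds $\mu'_2+\nu'_1$ and $\nu_2+\mu_1$ come from (they cut off the ``ambiguous" part of $\mu$ and $\nu$ from the rigid part), and verifying it will require a case analysis of the Yamanouchi/LR conditions on $Q$ together with the shape constraints $\mu,\nu\subseteq\lambda\subseteq(\mu_1+\nu_1,\ldots)$ that cut out $\mathcal{G}(\mu\ast\nu)$. Once this rigidity is established, the descent from $\Sym(\bm{s}\ast\bm{t})$ to $\Sym(B(\bm{s}\ast\bm{t}))$ is formal.
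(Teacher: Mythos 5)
Your proposal correctly identifies the structural heart of the result: under the LR bijection some extremal cells of $\mu$ (bottom of the first column) and $\nu$ (right end of the first row) land in canonical positions of every target $\lambda$, so their variables need not be averaged, and the rest is a formal descent from full symmetrization. This is exactly the role played in the paper by Lemma~\ref{skew_box_lem} (the ``arm rigidity'' lemma), and the paper likewise routes the proof through jeu de taquin on $\mu\ast\nu$ together with the observation $\zeta_\mu(\bm{s})\zeta_\nu(\bm{t})=\zeta_{\mu\ast\nu}(\bm{s}\ast\bm{t})$, after which Theorem~\ref{main} is a specialization of Theorem~\ref{skew}.

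There is, however, a genuine gap in the route you propose to establish rigidity. You argue via the Yamanouchi / reverse-lattice condition on the recording tableau $Q$, claiming that the cells of $Q$ with entry $1$ ``come from the first row of $N$'' and occupy a rigid block. But those cells of $Q$ record \emph{which new cells were created} while inserting row $1$ of $N$; they do not record \emph{where the actual entries of row $1$ of $N$ end up} in $P$. Under insertion/bumping those entries migrate, so the variables $t_{1j}$ are not attached to the cells that $Q$ labels $1$. The object you need to track is the trajectory of the individual entries of $M_0\ast M_1$ under jeu de taquin; and because a semi-standard tableau can contain repeated entries, that trajectory is not even well-defined without an extra device. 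The paper introduces a labeling map $\phi_T$ (decorating repeated entries with subscripts, producing a tableau over $\tilde{\N}$ with all entries distinct) and proves $\Rect(\phi_T(L))=\phi_T(\Rect(L))$, precisely so that ``which variable goes to which box'' is unambiguous. With that in place, Lemma~\ref{skew_box_lem} proves the rigidity directly by analyzing slides from inside corners, bounding the second-row length of the partially rectified tableau by $\min\{\lambda_2,\sum_{i\ge2}(\lambda_i-\mu_i)\}$, and noting that no elementary slide can move a lower-row entry past that threshold into the first row; dualizing via transposition gives the first-column statement. Those thresholds are the source of the bounds $\mu_2'+\nu_1'$ and $\nu_2+\mu_1$ that you correctly intuit but do not derive. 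In short: the high-level plan is right and the formal descent step is right, but the one non-formal step you flag as ``the main obstacle'' is indeed the entire theorem, your proposed tool for it (the Yamanouchi condition on $Q$) points at the wrong combinatorial object, and the well-definedness of the variable transport requires the labeling map $\phi_T$ that your plan omits.
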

Where $\mathcal{G}(\mu \ast \nu)$ is the set of all partitions $\lambda$ such that $c_{\mu\nu}^\lambda > 0$, but we define it in another way in \S3. The more precise definition of the symmetric group $\Sym(B(\bm{s} \ast \bm{t}))$ and the set of tableau $U_\lambda(\bm{s} \ast \bm{t})$ will be provided in \S \ref{prep} (Definition \ref{arm_body_def}) and \S \ref{proof_skew_subs} respectively. 

Furthermore, the following result can be obtained as an analogy of equality (2), and Theorem \ref{main} can be seen as a specialization of Theorem \ref{skew}.

\begin{thms}\label{skew}
Let $\lambda/\mu$ be a skew partition, and let $\lambda', \mu'$ be the conjugates of $\lambda, \mu$, respectively. Let $\bm{v} = (v_{ij}) \in T(\lambda/\mu, \C)$, and assume that the entries are variables. 
Assume that the real parts of the variables
$v_{1j}$ with $\displaystyle{{\rm min}\{\lambda_2, \sum_{i \ge 2}(\lambda_i - \mu_i)\}} + \mu_1 \le j \le \lambda_1 - 1$ and  
$v_{i1}$ with $\displaystyle{{\rm min} \{\lambda'_2, \sum_{j \ge 2}(\lambda'_j - \mu'_j)\}} + \mu'_1 \le i \le \lambda'_1 - 1$ 
are greater than or equal to 1 and the real parts of all other variables are greater than 1.
Then the following equality holds: for any $(\bm{u}_\nu(\bm{v}) \in U_{\nu}(\bm{v}))_{\nu \in \mathcal{G}(\lambda/\mu)}$, 
\begin{align*}
\sum_{\Sym(B(\bm{v}))} \zeta_{\lambda/\mu}(\bm{v})=\sum_{\Sym(B(\bm{v}))} \sum_{\nu \in \mathcal{G}(\lambda/\mu)} c_{\mu\nu}^{\lambda}\zeta_{\nu}(\bm{u}_{\nu}(\bm{v})).
\end{align*}
Here $\sum_{\Sym(B(\bm{v}))}$ denotes the summation over the permutations of all variables in $B(\bm{v})$.
\end{thms}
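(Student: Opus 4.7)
The plan is to combine the classical combinatorial proof of the Littlewood--Richardson rule for skew Schur functions (i.e.\ of \eqref{LR2}) via jeu de taquin rectification with a careful analysis of how each variable's position is moved under the rectification. Starting from
\begin{align*}
\zeta_{\lambda/\mu}(\bm{v}) = \sum_{M \in \SSYT(\lambda/\mu)} \prod_{(i,j) \in D(\lambda/\mu)} m_{ij}^{-v_{ij}},
\end{align*}
the first step is to apply the standard bijection
\begin{align*}
\SSYT(\lambda/\mu) \;\longleftrightarrow\; \bigsqcup_{\nu \in \mathcal{G}(\lambda/\mu)} \SSYT(\nu) \times \mathrm{LR}(\lambda/\mu,\nu),
\end{align*}
where $\mathrm{LR}(\lambda/\mu,\nu)$ is the set of Littlewood--Richardson tableaux, of cardinality $c_{\mu\nu}^\lambda$. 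Since rectification preserves the multiset of entries and its action on \emph{positions} is controlled by the recording tableau $L$ alone, each $L \in \mathrm{LR}(\lambda/\mu,\nu)$ yields a position bijection $\phi_L \colon D(\lambda/\mu) \to D(\nu)$ and hence a variable assignment $\phi_L^*(\bm{v}) \in T(\nu,\C)$ defined by $\phi_L^*(\bm{v})_{pq} = v_{\phi_L^{-1}(p,q)}$. Identifying $U_\nu(\bm{v}) = \{\phi_L^*(\bm{v}) : L \in \mathrm{LR}(\lambda/\mu,\nu)\}$ with the set from Section~3, the bijection reorganizes the sum as the pre-symmetrized refinement
\begin{align*}
\zeta_{\lambda/\mu}(\bm{v}) = \sum_{\nu \in \mathcal{G}(\lambda/\mu)} \sum_{L \in \mathrm{LR}(\lambda/\mu,\nu)} \zeta_\nu\bigl(\phi_L^*(\bm{v})\bigr).
\end{align*}

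The second step is a \emph{rigidity} claim: for every $\nu \in \mathcal{G}(\lambda/\mu)$ and every pair $L, L' \in \mathrm{LR}(\lambda/\mu,\nu)$, the tableaux $\phi_L^*(\bm{v})$ and $\phi_{L'}^*(\bm{v})$ differ only by a permutation of the variables in $B(\bm{v})$. Equivalently, the variables excluded from $B(\bm{v})$---the top-row block $v_{1j}$ in the prescribed range together with its transpose analogue on the first column---are carried by $\phi_L$ to positions in $D(\nu)$ that depend on $\nu$ alone. Granted this, applying $\sum_{\Sym(B(\bm{v}))}$ to the refined identity collapses the inner sum over $L$ into $c_{\mu\nu}^\lambda$ equal copies of $\sum_{\Sym(B(\bm{v}))} \zeta_\nu(\bm{u}_\nu(\bm{v}))$ for any chosen representative $\bm{u}_\nu(\bm{v}) \in U_\nu(\bm{v})$, which is the desired equality.

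I expect the rigidity claim to be the main obstacle. The approach is to track the row-$1$ and column-$1$ variables through the slide sequence recorded by $L$. The threshold $\min\{\lambda_2,\sum_{i\ge 2}(\lambda_i-\mu_i)\}$ encodes precisely how far a row-$1$ variable can be displaced leftward under rectification: it is bounded above by $\lambda_2$ (the width of row~$2$ caps horizontal sliding) and by $\sum_{i\ge 2}(\lambda_i-\mu_i)$ (the total number of boxes from which slides can originate). Consequently every row-$1$ variable at column index beyond $\min\{\lambda_2,\sum_{i\ge 2}(\lambda_i-\mu_i)\}+\mu_1$ survives rectification at a fixed position of $\nu$, independent of $L$; a symmetric analysis handles the first column. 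Once rigidity is proved, convergence is routine: the stated hypothesis places every $\phi_L^*(\bm{v})$ in the absolute convergence region $W_\nu$, because the ``relaxed'' variables (those with only $\Re\ge 1$) are precisely the ones that always land at non-corner positions of $\nu$. Finally, Theorem~\ref{main} falls out as the specialization in which $\lambda/\mu$ is taken to be the disconnected offset shape $\mu\ast\nu$ associated to the product $\zeta_\mu(\bm{s})\zeta_\nu(\bm{t})$.
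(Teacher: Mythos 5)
Your overall strategy---rectify via jeu de taquin, track how entries move, prove the arm positions are rigid, then symmetrize over $\Sym(B(\bm{v}))$---is the same as the paper's, but there is a genuine gap at your first displayed identity. The assertion that ``rectification preserves the multiset of entries and its action on positions is controlled by the recording tableau $L$ alone'' is false, so the pre-symmetrized refinement
$\zeta_{\lambda/\mu}(\bm{v}) = \sum_{\nu}\sum_{L \in \mathrm{LR}(\lambda/\mu,\nu)} \zeta_\nu(\phi_L^*(\bm{v}))$
does not hold and the object $\phi_L$ is not even well defined. A minimal counterexample: take $\lambda/\mu = (2,2)/(1)$, so $c^{(2,2)}_{(1)\,(2,1)} = 1$ and there is exactly one Littlewood--Richardson tableau. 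The two fillings $M_1$ (with $m_{12}=1, m_{21}=1, m_{22}=2$) and $M_2$ (with $m_{12}=1, m_{21}=2, m_{22}=3$) both rectify to shape $(2,1)$ via a single slide, and both therefore have the same recording data, yet the induced position bijections differ: for $M_1$ one gets $(1,2)\mapsto(1,2)$, $(2,1)\mapsto(1,1)$, $(2,2)\mapsto(2,1)$, while for $M_2$ one gets $(1,2)\mapsto(1,1)$, $(2,1)\mapsto(2,1)$, $(2,2)\mapsto(1,2)$. Correspondingly, writing out your claimed identity in this case gives two genuinely different multiple sums before any symmetrization is applied (the ordering constraints on the triple of summation indices are not the same on the two sides), so the identity is false as stated.

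The paper sidesteps this exactly by not asserting any unsymmetrized decomposition. It defines the position map $\rho_L$ and hence $\bm{v}_L$ for each individual $L \in \SSYT(\lambda/\mu)$ (using the labeled tableau $\phi_T(L)$ so that all entries are distinct), rewrites each single term as $L^{-\bm{v}} = \Rect(L)^{-\bm{v}_L}$, proves that the arm entries of $\bm{v}_L$ sit in positions depending only on $\nu = \shape(\Rect(L))$ (Lemma~\ref{skew_box_lem}, Corollary~\ref{skew_box_cor}), and only \emph{after} symmetrizing over $\Sym(B(\bm{v}))$ invokes the counting fact $\#\{L \in \SSYT(\lambda/\mu) : \Rect(L)=M\} = c_{\mu\nu}^\lambda$ to rearrange the sum. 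The essential order is: symmetrize first, then count. Your proposal tries to count first (organizing by the $c_{\mu\nu}^\lambda$ recording tableaux) and symmetrize second, and that order requires the false independence claim. Your rigidity statement and your analysis of the threshold $\min\{\lambda_2,\sum_{i\ge 2}(\lambda_i - \mu_i)\}$ are correct in spirit and match the paper's Lemma~\ref{skew_box_lem}; the fix is to state the rigidity for the maps $\rho_L$ indexed by all of $\SSYT(\lambda/\mu)$ rather than by LR tableaux, and to delay the grouping by $\nu$ until after the symmetrization has been applied.
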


The set $\mathcal{G}(\lambda/\mu)$ is the set of all partitions $\lambda$ such that $c_{\mu\nu}^\lambda > 0$.
The tableau $B(\bm{v})$ which is called the {\it body} of $\bm{v}$, and the set of tableau $U_\nu(\bm{v})$ are defined in \S3.2 more precisely.

\begin{rmk}\label{body_rmk}
The body of $\bm{v}$ is obtained by removing contiguous boxes from the bottom to up in the first column, and removing contiguous these from the rightmost to the left in the first row, from $\bm{v}$. (See Figure \ref{fig_body}.)

\begin{figure}[ht]
$$
\vcenter{\hbox{
\begin{tikzpicture}[baseline=(current bounding box.center)]
\node at (0,0){ 
~\begin{ytableau}
\none & \none & *(black!10)v_{13} & *(black!10)v_{14} & v_{15} & v_{16} \\
\none & *(black!10)v_{22} & *(black!10)v_{23}  \\
*(black!10)v_{31} & *(black!10)v_{32} & *(black!10)v_{33} \\
*(black!10)v_{41} \\
v_{51}
\end{ytableau}~};
\draw[line width=0.7pt] (0.2, -0.2) -- (0.8, -0.8) node[at={(1.4, -0.8)}]{$B(\bm{v})$};
\end{tikzpicture}. }}
$$
\caption{Example of body $\bm{v}$}
\label{fig_body}
\end{figure}

\end{rmk}

\begin{rmk}
For $\bm{s} \in T(\mu, \C)$ and $\bm{t} \in T(\nu, \C)$, $\bm{s} \ast \bm{t}$ is defined by putting $\bm{s}$ below and $\bm{t}$ to the right of the rectangle of empty boxes with $\mu_1$ columns and $\nu'_1$ rows. (See Figure \ref{fig_ast}.)

\begin{figure}[ht]
$$
\bm{s} \ast \bm{t} =
\vcenter{\hbox{
\begin{tikzpicture}[baseline=(current bounding box.center)]
\node at (0,0){ 
~\begin{ytableau}
\none & \none & \none & {} & {} & {} & {} \\
\none & \none & \none & {} & {} \\
*(black!10){} & *(black!10){} & *(black!10){} \\
*(black!10){} & *(black!10){} \\
*(black!10){} 
\end{ytableau}~};
\node at (-1.4, -0.3)[fill = black!10]{$\ \bm{s}\ $};
\node at (0.4, 0.85) [shape=circle] [fill = white!50]{$\bm{t}$};
\end{tikzpicture}. }}$$
\caption{Example of $\bm{s} \ast \bm{t}$}
\label{fig_ast}
\end{figure}

We define $\bm{s} \ast \bm{t}$ in \S3.3 again. 
Let $\delta = \shape(\bm{s} \ast \bm{t})$. 
By discussion in \S3.3, $\zeta_{\delta}(\bm{s} \ast \bm{t}) = \zeta_{\mu}(\bm{s}) \zeta_{\nu}(\bm{t})$. Theorem \ref{main} follows from applying Theorem $\ref{skew}$ to the skew tableau $\bm{s} \ast \bm{t}$ with some further discussions. 
\end{rmk}

This paper is organized as follows. 
In \S2, we introduce Knuth equivalence and jeu de taquin, which are the combinatorial background of semi-standard Young tableau. In \S3, we give proofs of Theorem \ref{main} and Theorem \ref{skew}. Finally, in \S4, we discuss further generalization of Theorem \ref{skew}.

\section{Combinatorial background}
This section is devoted to a review of Knuth equivalence and jeu de taquin, and the Littlewood-Richardson rule. 
These will be used to restrict the group action in Theorem \ref{skew} in the proof.

\subsection{Knuth equivalence}\label{Knuth}
We write words as a sequence of letters 
and write $w \cdot w'$ or $ww'$ for the word which is the juxtaposition of the two words $w$ and $w'$.
The {\it word} or {\it row-word} of $L \in \SSYT$ is defined by reading the entries of $L$ ``from left to right and bottom to top", i.e., starting with the bottom row, writing down its entries from left to right, then listing the entries from left to right in the next to the
bottom row and working up to the top. 
The row-word of $L$ is denoted by $w_{row}(L)$. 

\begin{dfn}[Knuth relation]
Two words $w, w'$ over $\N$ are related by \it{Knuth relation} if they satisfy one of the following conditions.

\begin{enumerate}
\item[(K)]
The words $w$ and $w'$ are of the form $w = u \cdot bca \cdot v$ and $w' = u \cdot bac \cdot v$ (or vice-versa) for some $a, b, c \in \N$ with $a < b \le c$ and some words $u, v$ over $\N$ (but $u, v$ can be the empty word). 

\item[(K')]
The words $w$ and $w'$ are of the form $w = u \cdot acb \cdot v$ and $w' = u \cdot cab \cdot v$ (or vice-versa) for some $a, b, c \in \N$ with $a \le b < c$ and some words $u, v$ over $\N$ (but $u, v$ can be the empty word). 
\end{enumerate}
\end{dfn}

For example, 
$231$ and $213$ are related by (K). 
Also, $15\cdot 132 \cdot 4$ and $15 \cdot 312 \cdot 4$ are related by (K'). 
For two words $w, w'$, we say that $w$ is {\it Knuth equivalent} to $w'$ if they are related by the transitive closure of the Knuth relations and we write $w \equiv w'$. 
The following theorem is known about Knuth equivalence.

\begin{thms}[cf. {\cite[\S2]{F}}]\label{inv}
Every word $w$ over $\N$ is Knuth equivalent to the row-word of a unique semi-standard Young tableau of normal shape.
\end{thms}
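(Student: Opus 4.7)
The plan is to prove existence and uniqueness separately, using Schensted's row insertion as the main tool. Throughout I write $P \leftarrow x$ for the tableau obtained by row-inserting the letter $x \in \mathbb{N}$ into a SSYT $P$ of normal shape, and for a word $w = w_1 w_2 \cdots w_n$ I set $P(w) := ((\emptyset \leftarrow w_1) \leftarrow w_2) \leftarrow \cdots \leftarrow w_n$.

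\medskip

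\textbf{Existence.} The claim to prove is that $w \equiv w_{\rm row}(P(w))$. Proceeding by induction on $n = |w|$, it suffices to establish the following key lemma: for any SSYT $P$ of normal shape and any $x \in \mathbb{N}$,
\[
w_{\rm row}(P) \cdot x \ \equiv\ w_{\rm row}(P \leftarrow x).
\]
I would prove this by tracking the bumping path of the insertion. When $x$ is appended to the bottom of the row-word and is to be ``pushed'' into row $r$ via bumping, say bumping out $y$, one can use the fact that the row of $P$ into which $x$ is inserted is weakly increasing (and the column condition gives the needed strict inequalities) to apply (K) and (K') repeatedly and shift $x$ upward past the appropriate letters. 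The argument reduces to the single-row case: if row $r$ reads $a_1 \le a_2 \le \cdots \le a_k$ and $x$ bumps $a_j$ (i.e.\ $a_{j-1} \le x < a_j$), then juxtaposing $x$ to the right of this row and Knuth-moving it leftward past each $a_i$ with $i \ge j$ yields the word of the updated row followed by the bumped letter $a_j$, which is then inserted into row $r+1$ by induction on the number of rows.

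\medskip

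\textbf{Uniqueness.} The claim is that if $w \equiv w'$ then $P(w) = P(w')$, since this forces the row-word of any SSYT $T$ to determine $T$ uniquely within its Knuth class (indeed one checks directly that $P(w_{\rm row}(T)) = T$ for any SSYT $T$, because inserting the entries row-by-row from the bottom row upward rebuilds $T$). By transitivity, it is enough to consider the case where $w$ and $w'$ differ by a single application of (K) or (K'). So I would verify:
\begin{itemize}
\item If $a < b \le c$ and $P$ is any SSYT, then $(P \leftarrow b) \leftarrow c \leftarrow a \ =\ (P \leftarrow b) \leftarrow a \leftarrow c$.
\item If $a \le b < c$ and $P$ is any SSYT, then $(P \leftarrow a) \leftarrow c \leftarrow b \ =\ (P \leftarrow c) \leftarrow a \leftarrow b$.
\end{itemize}
Each identity reduces, by analyzing the positions of the bumping paths of the two successive insertions, to a finite case check on how the inserted letters interact within a single row. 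The strict inequalities in the two Knuth relations are precisely what is needed to ensure the bumping paths land in the same boxes in either order.

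\medskip

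\textbf{Main obstacle.} The hard step is the bumping-preserves-Knuth-equivalence lemma used for existence: one has to argue carefully that, for each row traversed by the insertion path, the bumped letter can be transported to its new position using only the allowed triples $bca \leftrightarrow bac$ and $acb \leftrightarrow cab$, and to verify that the weakly-increasing/strictly-increasing conditions on rows and columns guarantee that the relevant inequalities $a < b \le c$ or $a \le b < c$ always hold at each move. Once that lemma is in place, uniqueness is a finite case check and both halves of the theorem follow. (Alternatively, the whole result can be bootstrapped from jeu de taquin once that machinery is set up in the next subsection, by showing that jeu de taquin slides preserve Knuth equivalence and that any word slides to a unique rectified tableau.)
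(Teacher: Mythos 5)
The paper does not prove this statement; it is quoted from Fulton \cite{F}\S2 and used as a black box (together with its $\tilde{\N}$-analogue, Theorem \ref{inv_tilde}), so there is no in-paper proof to compare against. Your Schensted-insertion sketch is a correct route. It matches Fulton for the existence direction (the lemma $w_{\mathrm{row}}(P)\cdot x\equiv w_{\mathrm{row}}(P\leftarrow x)$ is exactly Fulton's bumping lemma in \S2.1, and $P(w_{\mathrm{row}}(T))=T$ follows from inserting row by row from the bottom), but it diverges from Fulton on uniqueness: Fulton shows that Knuth-equivalent words yield the same $P$-tableau via Greene's increasing-subsequence invariants in \S3, whereas you propose to verify directly that insertion commutes with the two elementary Knuth moves --- which is Knuth's original 1970 route. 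Both work. One caution: your claim that each commutation identity ``reduces to a finite case check on how the inserted letters interact within a single row'' understates the difficulty considerably. The bumping paths of successive insertions propagate through many rows, and controlling their interaction requires the Row Bumping Lemma together with an induction over rows, not a one-row case check; this is the genuinely hard part of Knuth's argument and is precisely what Fulton's increasing-subsequence method was designed to avoid. So the outline is sound, but the crux is concentrated in the step you dispatch most quickly.
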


For a word $w$, we denote such a unique normal tableau by $P(w)$, i.e., $w \equiv w_{row}(P(w))$.

\subsection{Jeu de Taquin}
In this subsection, we review the operation {\it jeu de taquin}, which was introduced by Sch\"{u}tzenberger. It is an operation that transforms a skew tableau into a normal tableau. 

First, we define the operation called {\it elementary slide}. An elementary slide is performed on a tableau $L$ over $\N$ such that one entry is not in $\N$ but is a dot. Thus, if $a$ is the element below the dot and $b$ is the element to the right of the dot, then we replace the dot with $a$ if $a \le b$ and with $b$ if $b < a$. If either $a$ or $b$ is missing from the tableau, we replace the dot with the present entry. Elementary slides are usually as shown in Figure \ref{slide}.

A skew diagram $\lambda/\mu$ has one or more {\it inside corners}. An inside corner is a corner of the smaller (deleted) diagram $\mu$. An {\it outside corner} is a corner of $\lambda$. 
The operation of a {\it slide} takes a tableau $L = (l_{ij}) \in \SSYT(\lambda/\mu)$ and an inside corner $c$. Assume that a dot is put into the inside corner $c$.

The slide is completed by repeating elementary slides until the dot is placed in an outside corner.
If the dot is placed in an outside corner of the tableau, we remove both the box containing the dot and the dot itself.

\begin{figure}[h]
\begin{itemize}[label = {}]
\item
$
\vcenter{\hbox{
~\begin{ytableau}
\none & \none[\vdots] & \none[\vdots] \\
\none[\dots] & \bullet & b & \none[\dots] \\
\none[\dots] & a & \none & \none[\dots]\\
\none & \none[\vdots] & \none[\vdots]
\end{ytableau}~}}$
becomes 
$
\vcenter{\hbox{
~\begin{ytableau}
\none & \none[\vdots] & \none[\vdots] \\
\none[\dots] & b & \bullet & \none[\dots] \\
\none[\dots] & a & \none & \none[\dots]\\
\none & \none[\vdots] & \none[\vdots]
\end{ytableau}~}}$
if $b < a$ or $a$ is missing.
\item
$
\vcenter{\hbox{
~\begin{ytableau}
\none & \none[\vdots] & \none[\vdots] \\
\none[\dots] & \bullet & b & \none[\dots] \\
\none[\dots] & a & \none & \none[\dots]\\
\none & \none[\vdots] & \none[\vdots]
\end{ytableau}~}}$
becomes 
$
\vcenter{\hbox{
~\begin{ytableau}
\none & \none[\vdots] & \none[\vdots] \\
\none[\dots] & a & b & \none[\dots] \\
\none[\dots] & \bullet & \none & \none[\dots]\\
\none & \none[\vdots] & \none[\vdots]
\end{ytableau}~}}$
if $a \le b$ or $b$ is missing.
\end{itemize}
\caption{Elementary slides}
\label{slide}
\end{figure}

\begin{figure}[h]
$$~\begin{ytableau}
*(black!40){} & *(black!40){} & *(black!40){} & 6 \\
*(black!40) \bullet & 2 & 4 \\
2 & 3 & 5\\
5 & 5
\end{ytableau}~\leadsto
~\begin{ytableau}
*(black!40){} & *(black!40){} & *(black!40){} & 6 \\
2 & 2 & 4 \\
*(black!40)\bullet & 3 & 5\\
5 & 5
\end{ytableau}~\leadsto
~\begin{ytableau}
*(black!40){} & *(black!40){} & *(black!40){} & 6 \\
2 & 2 & 4 \\
3 & *(black!40)\bullet & 5\\
5 & 5
\end{ytableau}~\leadsto
~\begin{ytableau}
*(black!40){} & *(black!40){} & *(black!40){} & 6 \\
2 & 2 & 4 \\
3 & 5 & 5\\
5 &*(black!40)\bullet
\end{ytableau}~\leadsto
~\begin{ytableau}
*(black!40){} & *(black!40){} & *(black!40){} & 6 \\
2 & 2 & 4 \\
3 & 5 & 5\\
5 
\end{ytableau}~$$
\caption{Example of slide}
\end{figure}

Jeu de taquin is performed as a succession of {\it slides} on a tableau as follows. 
Given a tableau $L \in \SSYT(\lambda/\mu)$, the process of slide can be carried out from any inside corner. Another inside corner can be chosen for the resulting tableau, and the procedure repeated, until there are no more inside corners. This means that the resulting tableau has a normal shape. 
The whole process is called the {\it jeu de taquin}. 
The following theorem is known.

\begin{thms}[cf. {\cite[\S2]{F}}]\label{rect_word}
Suppose a slide can be performed on $L \in \SSYT$ to produce $M \in \SSYT$. Then the row-word  
of $L$ is Knuth equivalent to the row-word of $M$, i.e., 
$$w_{row}(L) \equiv w_{row}(M).$$
\end{thms}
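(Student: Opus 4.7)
The plan is to decompose any slide into its constituent elementary slides and show that each one preserves Knuth equivalence of a suitably extended row-word. For a tableau carrying one dotted empty cell, I define the extended row-word by reading each row left to right while skipping the dot and then concatenating bottom to top. Since the initial dot sits at an inside corner of $\mu$ (a cell of $D(\mu)$, hence outside the skew shape of $L$), the extended row-word of the dotted $L$ equals $w_{row}(L)$; once the final elementary slide deposits the dot at an outer corner of $\lambda$ and that box is deleted, the extended row-word equals $w_{row}(M)$. Transitivity of $\equiv$ therefore reduces the theorem to the claim that each elementary slide preserves Knuth equivalence of the extended row-word.

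The horizontal elementary slide is immediate: exchanging the dot at $(i,j)$ with $b=(i,j+1)$ inside row $i$ does not alter the sequence of non-dot letters of that row, so the extended row-word is literally unchanged. The substantive case is the vertical slide, where the dot at $(i,j)$ and $a=(i+1,j)$ swap under the condition $a\le b$. Writing $r_1\le\cdots\le r_m$ for the entries of row $i+1$ strictly right of $a$, $t_1\le\cdots\le t_p$ for those of row $i$ strictly left of the dot (with $t_\ell<a$ by column strictness), and $b=b_1\le\cdots\le b_s$ for those strictly right of the dot, the partition condition $\lambda_i\ge\lambda_{i+1}$ gives $s\ge m$, and the only segment of the extended row-word that changes is
\[
a\,r_1\cdots r_m\,t_1\cdots t_p\,b_1\cdots b_s \;\longleftrightarrow\; r_1\cdots r_m\,t_1\cdots t_p\,a\,b_1\cdots b_s .
\]
The task is then to realise this local transposition by an explicit sequence of Knuth moves.

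My strategy is cleanest in the sub-case $p=0$: I bubble $b_1$ leftward through the $r$-block by repeated $(K)$-moves on triples $r_{k-1}\,r_k\,b_1$ (using $b_1<r_1\le r_{k-1}$), apply $(K')$ to $a\,r_1\,b_1$ (valid since $a\le b_1<r_1$), and for each $k=2,\dots,m$ reabsorb $r_k$ into its final position by $k$ successive $(K')$-moves on triples $b_{\ell-1}\,r_k\,b_\ell$ and $a\,r_k\,b_1$, whose strict hypotheses all come from the column strictness $b_\ell<r_\ell$. The general case $p\ge 1$ reduces to this via an intricate sub-cascade that uses $a$ and the $b_\ell$'s as witnesses for moves involving the $t$-letters, exploiting $t_\ell<a\le b_1$. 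The main obstacle I anticipate is precisely this inequality bookkeeping: the slide condition supplies only the weak inequality $a\le b$, so every $(K')$ step must source its strict inequality from a column-strictness bound $b_\ell<r_\ell$ or $t_\ell<a$, and the cascade must be arranged so that the right witness letter is adjacent to each Knuth triple at the moment it is required, with the interaction between the $r$- and $t$-blocks being the most delicate point of the argument.
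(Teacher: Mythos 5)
The paper offers no proof of this statement; it is quoted from Fulton \cite{F}, \S2. So what matters is whether your blind argument is sound on its own. The skeleton — decompose a slide into elementary slides, track an extended row-word of the dotted array, and show each elementary slide is a Knuth equivalence — is exactly right, as is your treatment of the horizontal slide and of the vertical slide with $p=0$.

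The gap is in the vertical slide with $p\ge 1$: the truncated local identity you set out to prove,
\[
a\,r_1\cdots r_m\,t_1\cdots t_p\,b_1\cdots b_s \;\equiv\; r_1\cdots r_m\,t_1\cdots t_p\,a\,b_1\cdots b_s,
\]
is simply false under your stated hypotheses, so no cascade of $(K)$/$(K')$ moves inside this segment can succeed, ``intricate'' or not. Take $p=m=s=1$ with $t_1=1$, $a=2$, $b_1=3$, $r_1=4$, which satisfies $t_1<a\le b_1<r_1$: the left word $2413$ rectifies to the shape $(2,2)$ tableau with rows $13$ and $24$, whereas the right word $4123$ rectifies to the shape $(3,1)$ tableau with rows $123$ and $4$, so the two words are not Knuth equivalent. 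What you discarded are the entries $w$ of row $i+1$ lying strictly to the left of $a$. These are not optional: whenever $p\ge 1$, every $t$-letter in row $i$ has a filled box directly beneath it in row $i+1$ (a box of row $i$ strictly left of the hole is either in $D(\lambda/\mu)$ or is the vacated inside corner, and in both cases the box just below lies in $D(\lambda/\mu)$ because $(i+1,j)$ does), and column strictness gives $t_\ell<w$ for the $w$ below $t_\ell$. Those $w$'s, not merely $a$ and the $b_\ell$'s as you propose, are the witnesses needed to shuffle the $t$-letters; with the $w$-prefix restored the equivalence does hold, e.g.\ $w\,a\,r_1\,t_1\,b_1\equiv w\,r_1\,t_1\,a\,b_1$ when $t_1<w\le a\le b_1<r_1$. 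The correct local lemma must therefore be stated for the full two-row reading word of rows $i+1$ and $i$, not for the truncated segment you isolate.
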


Combining Theorem \ref{inv} and Theorem \ref{rect_word}, we have that $L \in \SSYT$ becomes the normal tableau $P(w_{row}(L))$ by jeu de taquin. We define ${\rm Rect}(L)$ as $P(w_{row}(L))$ for $L \in \SSYT$.

The following theorem about jeu de taquin is one of the combinatorial rules for computing the Littlewood-Richardson coefficients $c_{\mu\nu}^\lambda$.

\begin{thms}[cf. {\cite[\S5]{F}}]\label{LRrule2}
For any $M \in \SSYT(\nu)$, 
$$
\#\{L \in \SSYT(\lambda/\mu) \ |\ \Rect(L) = M\} = c_{\mu\nu}^\lambda.
$$
\end{thms}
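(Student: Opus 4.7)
The plan is to derive the theorem from the symmetric-function identity $s_{\lambda/\mu} = \sum_\nu c_{\mu\nu}^\lambda s_\nu$ recorded as equation~(2) in the introduction. Set $N_M := \#\{L \in \SSYT(\lambda/\mu) : \Rect(L) = M\}$; provisionally, write $N_\nu$ for the common value of $N_M$ as $M$ varies over $\SSYT(\nu)$, pending the shape-invariance claim below.

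Since each elementary slide moves a single box without altering the labels it carries, $L$ and $\Rect(L)$ share the same content, so $x^L = x^{\Rect(L)}$ for the usual monomial weights. Partitioning $\SSYT(\lambda/\mu)$ by the value of $\Rect(L)$ rewrites the combinatorial definition of $s_{\lambda/\mu}$ as
\begin{align*}
s_{\lambda/\mu} \;=\; \sum_{L \in \SSYT(\lambda/\mu)} x^L \;=\; \sum_M N_M \, x^M,
\end{align*}
where $M$ ranges over all normal SSYT. Assuming shape-invariance, this collapses to $\sum_\nu N_\nu s_\nu$; comparing with equation~(2) and using the linear independence of the Schur functions yields $N_\nu = c_{\mu\nu}^\lambda$, which is exactly the statement of the theorem.

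The main obstacle is therefore the shape-invariance of $N_M$: for any $M, M' \in \SSYT(\nu)$ one must produce a bijection between $\{L : \Rect(L) = M\}$ and $\{L : \Rect(L) = M'\}$. By Theorems~\ref{inv} and~\ref{rect_word}, the condition $\Rect(L) = M$ is equivalent to $w_{row}(L) \equiv w_{row}(M)$, so this amounts to matching skew tableaux whose row-words lie in Knuth classes with the same normal shape. A clean route is Haiman's dual equivalence: any two SSYT of shape $\nu$ are connected by a chain of elementary dual moves, and each such move lifts to an involution on preimages under $\Rect$, yielding the required bijection. Alternatively, one can first handle the superstandard choice $M = Y_\nu$ (the tableau with $i$-th row equal to $(i,\ldots,i)$): here $\{L : \Rect(L) = Y_\nu\}$ is precisely the set of classical Littlewood--Richardson tableaux of shape $\lambda/\mu$ and content $\nu$, of cardinality $c_{\mu\nu}^\lambda$, and a dual-equivalence bijection then extends the identity to arbitrary $M$. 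I expect the combinatorial symmetry across Knuth classes to be the most delicate step; the rest is bookkeeping.
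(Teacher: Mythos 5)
The paper does not prove this theorem; it cites it from Fulton's \emph{Young Tableaux} (\S5) as a known combinatorial form of the Littlewood--Richardson rule, so there is no in-paper proof to compare against. Judged on its own terms, your outline is mathematically sound: content preservation under jeu de taquin gives $x^L = x^{\Rect(L)}$, hence $s_{\lambda/\mu} = \sum_M N_M x^M$; shape-invariance of $N_M$ collapses this to $\sum_\nu N_\nu s_\nu$; and linear independence of Schur functions together with equation~(2) forces $N_\nu = c_{\mu\nu}^\lambda$. You also correctly identify shape-invariance as the genuine mathematical content, and the translation of $\Rect(L) = M$ into $w_{\rm row}(L) \equiv w_{\rm row}(M)$ via Theorems~\ref{inv} and~\ref{rect_word} is exactly right.

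The one place I would push back is the framing that once shape-invariance is granted ``the rest is bookkeeping.'' Shape-invariance is itself a theorem of comparable depth to the result you are proving; invoking Haiman's dual equivalence (or the Benkart--Sottile--Stroomer tableau-switching bijection) is a legitimate route, but these are not machinery the paper develops, so the proposal is a high-level sketch rather than a self-contained argument. Fulton's own treatment establishes the independence of the choice of $M$ via the plactic monoid: one identifies $c_{\mu\nu}^\lambda$ with the number of factorizations of a fixed word $w_{\rm row}(W)$ (for $W \in \SSYT(\lambda)$) as a plactic product of tableau words of shapes $\mu$ and $\nu$, and the well-definedness of $P(\cdot)$ (your Theorem~\ref{inv}) then does the work. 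So your dual-equivalence route is a genuinely different path to the same invariance lemma; it is more bijectively transparent but requires developing dual equivalence from scratch, whereas the plactic-monoid route reuses precisely the tools the paper already imports in \S2. The alternative you sketch (specialize to the superstandard $Y_\nu$ to recover the classical LR-tableaux count, then transport by dual equivalence) is also correct but, as you note, still needs the dual-equivalence step and so does not actually save work.
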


\section{Proof of main Theorems}

In this section, we prove the main theorems. 
In \S\ref{prep}, we introduce $\tilde{\N}$, which is the set of positive integers labeled by positive integers, in order to distinguish each entry in semi-standard Young tableau. 
In \S3.2, we prove Theorem \ref{skew}, and in \S3.3, we prove the Theorem \ref{main} by using Theorem \ref{skew}. 

\subsection{Preparation}\label{prep}

Let $\tilde{\N}=\{k_l\ |\ k, l \in \N\}$ be a set of elements endowed with the total order
$$1_1 < 1_2 < \dots < 2_1 < 2_2 < \dots < 3_1 < \cdots.$$
Based on this order, we define semi-standard Young tableaux over $\tilde{\N}$, and Knuth equivalence on  words over $\tilde{\N}$. We denote the set of these tableaux by $\SSYT_{\tilde{\N}}$ and Knuth equivalence by $\equiv_{\tilde{\N}}$.

Analogous to the theorems for $\SSYT$, we have the following theorems for $\SSYT_{\tilde{\N}}$.

\begin{thms}[$\tilde{\N}$ analogue of Theorem \ref{inv}]\label{inv_tilde}
Every word over $\tilde{\N}$ is Knuth equivalent to the row-word of a unique semi-standard Young tableau over $\tilde{\N}$ of a normal shape.
\end{thms}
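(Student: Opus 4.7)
The plan is to reduce Theorem \ref{inv_tilde} to the already-stated Theorem \ref{inv} by transferring everything across an order-preserving bijection on the finitely many letters used by a given word. Although $\tilde{\N}$ has order type $\omega \cdot \omega$ rather than $\omega$, any single word $w$ uses only a finite subset $S \subseteq \tilde{\N}$, and every finite totally ordered set is canonically order-isomorphic to $\{1, 2, \ldots, |S|\} \subseteq \N$.

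First I would fix a word $w$ over $\tilde{\N}$, let $S$ be the set of letters occurring in $w$, and let $\phi \colon S \to \{1, \ldots, |S|\}$ be the unique order-preserving bijection. The key formal observation is that $\phi$ is a morphism of all the relevant structure: extending it letterwise to words intertwines $\equiv_{\tilde{\N}}$ with $\equiv$ (since the Knuth relations (K) and (K') are phrased purely in terms of the comparisons $a < b \le c$ and $a \le b < c$), carries semi-standard tableaux over $S$ bijectively to semi-standard tableaux over $\{1, \ldots, |S|\}$ of the same shape (since the row and column conditions are order-defined), and commutes with taking the row-word.

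Given this, both halves of the theorem follow immediately. For existence, I would apply Theorem \ref{inv} to $\phi(w)$ to obtain a unique normal SSYT $M$ over $\N$ with $w_{row}(M) \equiv \phi(w)$; since Knuth equivalence preserves the multiset of letters, $M$ is filled from $\{1, \ldots, |S|\}$, so $T := \phi^{-1}(M)$ is a normal SSYT over $\tilde{\N}$ satisfying $w_{row}(T) \equiv_{\tilde{\N}} w$. For uniqueness, any other normal SSYT $T'$ over $\tilde{\N}$ with $w_{row}(T') \equiv_{\tilde{\N}} w$ again uses letters only from $S$, so $\phi(T')$ is a normal SSYT over $\N$ whose row-word is $\equiv \phi(w)$; Theorem \ref{inv} then forces $\phi(T') = M = \phi(T)$, hence $T' = T$. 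There is no real obstacle here: the only thing to verify is that Knuth relations and the semi-standard condition depend only on the total order of the alphabet, which is immediate from the definitions.
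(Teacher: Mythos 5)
Your proof is correct and takes essentially the same approach as the paper, which disposes of Theorems 3.1 and 3.2 in a single sentence: ``We can prove Theorems \ref{inv_tilde} and \ref{rect_word_tilde} by translating the Theorems \ref{inv} and \ref{rect_word} via some order-preserving maps from $\N$ to $\tilde{\N}$.'' Your write-up is a more careful spelling-out of this sketch --- in particular, your observation that one should restrict to the finite set $S$ of letters actually occurring (since $\tilde{\N}$ has order type $\omega^2$, not $\omega$, so no global order-preserving bijection with $\N$ exists) is the right way to make the paper's remark precise.
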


\begin{thms}[$\tilde{\N}$ analogue of Theorem \ref{rect_word}]\label{rect_word_tilde}
Suppose a slide can be performed on $\tilde{L} \in \SSYT_{\tilde{\N}}$ to produce $\tilde{M} \in \SSYT_{\tilde{\N}}$. Then the row-word 
of $\tilde{L}$ is Knuth equivalent to the row-word of $\tilde{M}$, i.e., 
$$w_{row}(\tilde{L}) \equiv_{\tilde{\N}} w_{row}(\tilde{M}).$$
\end{thms}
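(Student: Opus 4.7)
The plan is to deduce Theorem \ref{rect_word_tilde} from the classical Theorem \ref{rect_word} by an order-preserving relabeling of the alphabet. First I observe that an elementary slide does not introduce new entries; it merely relocates the existing ones. Consequently, the finite set $S \subset \tilde{\N}$ of entries appearing in $\tilde{L}$ coincides with the set appearing in $\tilde{M}$. Since $(\tilde{\N}, <)$ is totally ordered, $S$ inherits a total order, and there is a unique order-preserving bijection $\phi \colon S \to \{1, 2, \ldots, |S|\}$.

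Next I apply $\phi$ entrywise to produce tableaux $L := \phi(\tilde{L})$ and $M := \phi(\tilde{M})$ of the same skew shape as $\tilde{L}$ and $\tilde{M}$. Both the semi-standard condition (weakly increasing rows, strictly increasing columns) and each elementary slide are formulated purely in terms of order comparisons between entries, and $\phi$ preserves all such comparisons. Thus $L, M \in \SSYT$, and the slide taking $\tilde{L}$ to $\tilde{M}$ corresponds under $\phi$ to the slide taking $L$ to $M$ (at the same inside corner). By Theorem \ref{rect_word} we obtain $w_{row}(L) \equiv w_{row}(M)$.

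Finally, each Knuth move of type (K) or (K') is characterized purely by the order relations $a < b \le c$ or $a \le b < c$ among the three letters involved, so every such move admits a literal $\phi^{-1}$-translate over $\tilde{\N}$. Pulling back via $\phi^{-1}$ a chain of Knuth moves over $\N$ connecting $w_{row}(L)$ and $w_{row}(M)$ therefore yields a chain of Knuth moves over $\tilde{\N}$ connecting $\phi^{-1}(w_{row}(L)) = w_{row}(\tilde{L})$ and $\phi^{-1}(w_{row}(M)) = w_{row}(\tilde{M})$. This gives $w_{row}(\tilde{L}) \equiv_{\tilde{\N}} w_{row}(\tilde{M})$, as required.

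There is no substantive obstacle: the content of the statement is transferred formally from the classical case because the alphabet enters the combinatorics of slides and Knuth relations only through its total order. The only point that must be checked with care is precisely this invariance of the definitions under order-preserving bijections, which is immediate from the formulations in \S2.1 and Figure \ref{slide}.
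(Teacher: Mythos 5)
Your proof is correct and takes essentially the same route the paper indicates: the paper states (without spelling out details) that Theorems \ref{inv_tilde} and \ref{rect_word_tilde} follow from the classical versions ``by translating \dots via some order-preserving maps from $\N$ to $\tilde{\N}$,'' and your argument is exactly a careful execution of that transfer principle, using the unique order isomorphism between the finite set of entries and an initial segment of $\N$, together with the observations that slides, the semistandard condition, and the Knuth relations all depend only on order comparisons and that Knuth moves preserve the multiset of letters (so the whole chain stays within the image of $\phi$).
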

We can prove Theorems \ref{inv_tilde} and \ref{rect_word_tilde} by translating Theorems \ref{inv} and \ref{rect_word} via some order-preserving maps from $\N$ to $\tilde{\N}$.
Then, we define the operation of jeu de taquin on $\SSYT_{\tilde{\N}}$ with the order in $\tilde{\N}$, and denote the resulting tableau of jeu de taquin on $\tilde{L} \in \SSYT_{\tilde{\N}}$ by $\Rect(\tilde{L})$.

We define the map $\phi_T$ from $\SSYT$ to $\SSYT_{\tilde{\N}}$. For any $L \in \SSYT$ and each $k\in \N$, we label the entries $k$ in $L$ with subscripts $1, 2, \dots$ in increasing order, starting from the bottom row to the top, and from left to right within each row. 
It is obvious that for any $L \in \SSYT$, we can recover $L$ by applying the labeling-off operation to $\phi_T(L)$. Thus, the restriction of $\phi_T$ to its image is invertible, and its inverse, denoted by $\phi_T^{-1}$, is the labeling-off map. 

In parallel, we define the map $\phi_w$ from the set of all words over $\N$ to those over $\tilde{\N}$. For any word $w$ over $\N$ and each $k\in \N$, we label the letters $k$ in $w$ with subscripts $1, 2, \dots$ in increasing order, starting from left to right. 
It is obvious that for any word $w$, we can recover $w$ by applying the labeling-off operation to $\phi_w(w)$. Thus, the restriction of $\phi_w$ to its image is invertible, and its inverse, denoted by $\phi_w^{-1}$, is the labeling-off map.
For any $L \in \SSYT$, we immediately have that $\phi_w (w_{row}(L)) = w_{row}(\phi_T(L))$ by the definitions.\\[-0.5em]

\begin{ex}
For $
L=
~\begin{ytableau}
1 & 1 & 1 \\
2 & 3 & 4 \\
3
\end{ytableau}~$ with $w_{row}(L)=3234111$, we have the following by the definition:
$$\phi_T(L)=
~\begin{ytableau}
1_1 & 1_2 & 1_3 \\
2_1 & 3_2 & 4_1 \\
3_1
\end{ytableau}~, 
\phi_w(w_{row}(L)) = 3_1 2_1 3_2 4_1 1_1 1_2 1_3 = w_{row}(\phi_T(L)).$$
\end{ex}

Then, we have the following proposition.

\begin{prop}\label{rect_phi}
Let $L$ be a semi-standard Young tableau (over $\N$).
Then, we have
$$\Rect(\phi_T(L)) = \phi_T(\Rect(L)).$$
\end{prop}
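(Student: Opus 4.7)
The plan is to reduce the proposition to the following one-slide lemma: if $M$ is obtained from $L$ by a single jeu de taquin slide at some inside corner, then $\phi_T(M)$ is obtained from $\phi_T(L)$ by the corresponding slide in $\SSYT_{\tilde{\N}}$ (with $\tilde{\N}$-ordering) at the same inside corner. Granting this, choose any sequence of slides on successive inside corners of $L$ that eventually produces the normal tableau $\Rect(L)$; the same sequence of inside corners is available on $\phi_T(L)$ because $L$ and $\phi_T(L)$ share the same skew shape, and iterating the one-slide lemma transports this sequence through $\phi_T$ and ends at $\phi_T(\Rect(L))$, which is a normal tableau in $\SSYT_{\tilde{\N}}$. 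By the $\tilde{\N}$-analogues Theorem \ref{inv_tilde} and Theorem \ref{rect_word_tilde}, the rectification of $\phi_T(L)$ is independent of the sequence chosen, so $\Rect(\phi_T(L)) = \phi_T(\Rect(L))$.

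To prove the one-slide lemma, I would first verify that the two slide paths coincide. At each elementary step the dot is at some position and compares the entry immediately below (value $a$) with the entry immediately to the right (value $b$). Because the slide path is monotone (each step moves down or right), the cells below and right of the current dot have not yet been visited, and therefore still carry their original entries and, in the labeled version, their original subscripts. When $a \ne b$ in $\N$, the comparison in $\tilde{\N}$ yields the same answer. In the tie case $a = b = k$, the $\N$-rule sends the dot downward; the $\tilde{\N}$-rule also sends it downward, because the cell below sits in a strictly larger row index than the cell to the right, and the bottom-to-top, left-to-right labeling used in $\phi_T$ assigns a smaller subscript to entries in lower rows. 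Hence the two paths agree step by step, and after the slide the two resulting tableaux are identical as tableaux over $\N$ once the subscripts are forgotten.

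It remains to show that the subscripts transported along by the $\tilde{\N}$-slide match the subscripts that $\phi_T$ would freshly assign to $M$. Equivalently, for each fixed $k \in \N$, the bottom-to-top, left-to-right ordering of $k$-cells is preserved by the slide. This order-preservation statement is the main obstacle, and I would handle it by a short case analysis. The potentially bad configuration is a down-step that lifts a $k$-cell from position $(r+1,c)$ up to $(r,c)$ and drops it into the row of another $k$-cell $Q$ at position $(r,q)$ with $q<c$, replacing a row-based priority by a column-based one and flipping the order. Such a configuration is impossible in a semi-standard Young tableau: column-strictness gives $L_{(r,c)} < k$, while row-weakness applied to row $r$ would force $k = L_{(r,q)} \le L_{(r,c)} < k$, a contradiction. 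A direct check shows that a right-step leaves no room for an order-flipping $Q$ at all (any flip would require a cell in the same row whose column is strictly between two consecutive integers), and the same principle combined with the monotone structure of the path rules out reversals between two $k$-cells that both lie on the slide. A small amount of care is needed when cells of $D(\mu)$ interrupt a row in the skew case, but the row-containment property of partitions keeps the chain of inequalities intact.
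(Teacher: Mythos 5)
Your proof goes by a genuinely different route than the paper's. The paper stays entirely at the level of words: it proves a lemma (\ref{lem_phi}) that $\phi_w$ carries a single Knuth relation (K) or (K') to a single Knuth relation over $\tilde{\N}$, then strings together $w_{row}(\Rect(\phi_T(L))) \equiv_{\tilde{\N}} w_{row}(\phi_T(L)) = \phi_w(w_{row}(L)) \equiv_{\tilde{\N}} \phi_w(w_{row}(\Rect(L))) = w_{row}(\phi_T(\Rect(L)))$ and invokes Theorem \ref{inv_tilde}. You instead stay at the tableau level and prove a one-slide commutation lemma, then chain slides and appeal to confluence (Theorems \ref{inv_tilde}, \ref{rect_word_tilde}). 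Both routes rely on the same $\tilde{\N}$-analogue theorems; the paper's word-level lemma is a three-letter local check and is shorter to verify rigorously, whereas your slide-level lemma is a stronger, more geometric statement that needs more care.

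That care is exactly where your write-up thins out. Your reduction of the relabeling claim to the single ``down-step drops a $k$-cell into the row of $Q$ at $(r,q)$, $q<c$'' configuration is not justified: you also need to rule out, e.g., the case where a $k$-cell is lifted from $(r+1,c)$ to row $r$ while another $k$-cell sits at $(r+1,q')$ with $q'>c$, and the cases where the competing $k$-cell $Q$ is itself moved by the slide or where $Q$'s cell was altered by an earlier elementary step; the phrases ``a direct check shows'' and ``the same principle \dots rules out'' are doing real work here. There is a shorter and complete way to close this gap, which I recommend you substitute. For a fixed $k$, the $k$-cells of any SSYT form a horizontal strip (pairwise distinct columns), and one checks that for such a strip the $\phi_T$-order (bottom-to-top, then left-to-right) coincides with the plain column order: if $c_1<c_2$ then row-weakness and column-strictness force $r_1\ge r_2$, so the leftmost $k$-cell is also the lowest. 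During a single slide, each cell on the path moves exactly one step up or left, so every $k$-cell's column changes by $0$ or $-1$. Given $c_1<c_2$, after the slide $c_1'\le c_1\le c_2-1\le c_2'$, and $c_1'=c_2'$ would put two $k$'s in one column of the semi-standard output $M$, which is impossible; hence $c_1'<c_2'$ and the column (equivalently $\phi_T$) order is preserved. This argument also handles the tie-breaking step you discuss: when $a=b=k$, the cell below has a strictly larger row index, hence (by the strip shape) a weakly smaller column, hence a strictly smaller $\phi_T$-subscript, and the $\tilde{\N}$-comparison sends the dot downward exactly as the $\N$-rule does. With this replacement, your one-slide lemma and hence your proposal are complete.
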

To prove Proposition \ref{rect_phi}, we show the following lemma.

\begin{lem}\label{lem_phi}
Let $w, w'$ be words over $\N$.
Then, 
$$w \equiv w' \Longrightarrow \phi_w(w) \equiv_{\tilde{\N}} \phi_w(w').$$
\end{lem}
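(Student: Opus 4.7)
The plan is to reduce the lemma to a single elementary Knuth move and then carry out a short case analysis on whether any of the three letters involved coincide.

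Since $\equiv$ is by definition the transitive closure of the elementary Knuth relations, it suffices to treat the case in which $w'$ is obtained from $w$ by one application of (K) or (K'). Write $w = u \cdot r \cdot v$ and $w' = u \cdot r' \cdot v$, where $(r, r') = (bca, bac)$ with $a < b \le c$ in the (K) case, and $(r, r') = (acb, cab)$ with $a \le b < c$ in the (K') case.

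The key observation is that the subscript assigned to each letter by $\phi_w$ is determined solely by the number of prior occurrences of that letter. Since $r$ and $r'$ consist of the same multiset of letters, the prefixes $u \cdot r$ and $u \cdot r'$ have identical letter counts, so $\phi_w(w)$ and $\phi_w(w')$ agree on the labellings of $u$ and of $v$ and can differ only on the three interior letters. Letting $\alpha, \beta, \gamma$ denote the numbers of occurrences of $a, b, c$ in $u$, I would then write out $\phi_w(r)$ and $\phi_w(r')$ explicitly. For (K) with $a < b < c$ one obtains $b_{\beta+1} c_{\gamma+1} a_{\alpha+1}$ and $b_{\beta+1} a_{\alpha+1} c_{\gamma+1}$, and the inequalities $a < b \le c$ in $\N$ lift to $a_{\alpha+1} < b_{\beta+1} \le c_{\gamma+1}$ in $\tilde{\N}$; in the subcase $b = c$ the two triples become $b_{\beta+1} b_{\beta+2} a_{\alpha+1}$ and $b_{\beta+1} a_{\alpha+1} b_{\beta+2}$, which still satisfy the (K) template in $\tilde{\N}$. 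The (K') case is handled identically, with the subcase $a = b$ producing the triples $a_{\alpha+1} c_{\gamma+1} a_{\alpha+2}$ and $c_{\gamma+1} a_{\alpha+1} a_{\alpha+2}$, fitting the (K') template.

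The only mildly delicate point is the bookkeeping when two of $a, b, c$ coincide, since the two subscripts attached to the repeated letter end up in swapped positions on the two sides; however, in each such subcase the resulting triple in $\tilde{\N}$ still matches the relevant Knuth-relation template, so a single elementary move in $\N$ lifts to a single elementary move in $\tilde{\N}$, and the lemma follows.
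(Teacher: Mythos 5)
Your proposal is correct and follows essentially the same line as the paper's proof: reduce to a single elementary Knuth move, observe that the subscripts on $u$, $v$, and the three middle letters are determined purely by letter counts in the prefix $u$, and verify that the lifted three-letter triple satisfies the corresponding Knuth template in $\tilde{\N}$. (One small misstatement: in the equal-letter subcases the two subscripts on the repeated letter are not ``swapped''---in your explicit triples $\beta+1$ still precedes $\beta+2$ and $\alpha+1$ still precedes $\alpha+2$ on both sides---but the triples themselves are written out correctly, so the argument is unaffected.)
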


\begin{proof}[Proof of Lemma \ref{lem_phi}]
It suffices to check only the case in which $w$ and $w'$ are related by a single Knuth relation.
\\

\noindent
\textbf{\underline{Case of (K).}} Suppose $w$ and $w'$ are related by (K). Then we can assume that they are of the form $w = u \cdot bca \cdot v$ and $w' = u \cdot bac \cdot v$ for some words $u, v$ and $a, b, c \in \N$ with $a < b \le c$. Let $\alpha, \beta, \gamma$ (resp. $\alpha', \beta', \gamma'$) be the subscripts labeled with the entries originated from $a, b, c$ respectively in $\phi_w(w)$ (resp. $\phi_w(w')$).
Let $\tilde{u}, \tilde{v}$ (resp. $\tilde{u}', \tilde{v}'$) be the words over $\tilde{\N}$ originated from $u, v$ in $\phi_w(w)$ (resp. $\phi_w(w')$).
It means that $\phi_w(w) = \tilde{u} \cdot b_{\beta} c_{\gamma} a_{\alpha} \cdot \tilde{v}$ and $\phi_w(w') = \tilde{u}' \cdot b_{\beta'} a_{\alpha'} c_{\gamma'} \cdot \tilde{v}'$.
Now, we prove that $\phi_w(w') = \tilde{u} \cdot b_{\beta} a_{\alpha} c_{\gamma} \cdot \tilde{v}$.

By the definition of $\phi_w$, we have that $\tilde{u} = \tilde{u}' = \phi_w(u)$. Moreover, the subscripts in $\tilde{v}$ (resp. $\tilde{v}'$) are determined only by the number of each integer in $u \cdot bca$ (resp. $u \cdot bac$). When we count the numbers, there is no difference between the two cases: in $u \cdot bca$ and in $u \cdot bac$. So we have $\tilde{v} = \tilde{v}'$.

The subscript $\alpha$ (resp. $\alpha'$) is the number of the integer $a$ in $u\cdot bc$ (resp. $u \cdot b$) plus $1$. By the condition $a < b \le c$ especially $a \ne c$, we have that $\alpha = \alpha'$.
Both of the subscripts $\beta$ and $\beta'$ are the number of the integer $b$ in $u$ plus $1$. Thus, $\beta = \beta'$. 
The subscript $\gamma$ (resp. $\gamma'$) is the number of the integer $c$ in $u\cdot b$ (resp. $u \cdot ba$) plus $1$. By the condition $a \ne c$, we have that $\gamma = \gamma'$.

Then, we have $\phi_w(w') = \tilde{u} \cdot b_{\beta} a_{\alpha} c_{\gamma} \cdot \tilde{v}$.
If $a_\alpha < b_\beta < c_\gamma$, this means that $\phi_w(w')$ is related to $\phi_w(w)$ by (K). Therefore, we now prove $a_\alpha < b_\beta < c_\gamma$. 
If $b < c$, we have $a_\alpha < b_\beta < c_\gamma$ immediately.
If $b = c$, we have $\beta < \gamma$ because $b$ is left to $c$ in $w$. So we have $a_\alpha < b_\beta < c_\gamma$ and then we have the conclusion that $\phi_w(w')$ is related to $\phi_w(w)$ by (K).
\\

\noindent
\textbf{\underline{Case of (K').}} Suppose $w$ and $w'$ are related by (K'). Then we can assume that they are of the form $w = u \cdot acb \cdot v$ and $w' = u \cdot cab \cdot v$ for some words $u, v$ and $a, b, c \in \N$ with $a \le b < c$. Let $\alpha, \beta, \gamma$ (resp. $\alpha', \beta', \gamma'$) be the subscripts labeled with the entries originated from $a, b, c$ respectively in $\phi_w(w)$ (resp. $\phi_w(w')$).
Let $\tilde{u}, \tilde{v}$ (resp. $\tilde{u}', \tilde{v}'$) be the words over $\tilde{\N}$ originated from $u, v$ in $\phi_w(w)$ (resp. $\phi_w(w')$).
It means that $\phi_w(w) = \tilde{u} \cdot a_{\alpha} c_{\gamma} b_{\beta} \cdot \tilde{v}$ and $\phi_w(w') = \tilde{u}' \cdot c_{\gamma'} a_{\alpha'} b_{\beta'} \cdot \tilde{v}'$.
We can obtain that $\phi_w(w') = \tilde{u} \cdot c_{\gamma} a_{\alpha} b_{\beta} \cdot \tilde{v}$ and $a_\alpha < b_\beta \le c_\gamma$ similarly to the case of (K).
Then we have the conclusion that  $\phi_w(w')$ is related to $\phi_w(w)$ by (K'). 
\end{proof}

\begin{proof}[Proof of Proposition \ref{rect_phi}]

By Theorem \ref{rect_word}, Theorem \ref{rect_word_tilde} and Lemma \ref{lem_phi}, it follows that 
\begin{align*}
w_{row}(\Rect(\phi_T(L)))
&\equiv_{\tilde{\N}} w_{row}(\phi_T(L)) &(\text{by Thm.\ref{rect_word_tilde}})\\
&= \phi_w(w_{row}(L)) & \\
&\equiv_{\tilde{\N}} \phi_w(w_{row}(\Rect(L))) & (\text{by Thm.\ref{rect_word} and Lem.\ref{lem_phi}})\\
&= w_{row}(\phi_T(\Rect(L))).
\end{align*}
By Theorem \ref{inv}, 
\begin{align*}
\Rect(\phi_T(L))
 = \phi_T(\Rect(L)).
\end{align*}
Thus, we obtain Proposition \ref{rect_phi}.
\end{proof}

We define {\it arm} and {\it body} for skew partitions and skew tableaux.

\begin{dfn}[Arm and body]\label{arm_body_def}
Let $\lambda/\mu$ be a skew partition. 
\begin{itemize}
\item The {\it right-arm} of diagram $\lambda/\mu$ is the part of the diagram which consists of the boxes $(1, m + \mu_1), \dots, (1, \lambda_1)$ with $m = {\rm min}\{\lambda_2, \sum_{i \ge 2}(\lambda_i - \mu_i)\}$. We denote the right-arm of $\lambda/\mu$ by $A_r(\lambda/\mu)$. 
\item
The {\it left-arm} of $\lambda/\mu$ is the part of the diagram which consists of the boxes $(n + \mu'_1, 1), \dots, (\lambda'_1, 1)$ with $n = {\rm min} \{\lambda'_2, \sum_{j \ge 2}(\lambda'_j - \mu'_j)\}$. We denote the left-arm of $\lambda/\mu$ by $A_l(\lambda/\mu)$. 
\item
The {\it body} of $\lambda/\mu$ is the part of the diagram obtained by removing $A_r(\lambda/\mu)$ and $A_l(\lambda/\mu)$ from the diagram $\lambda/\mu$, and is denoted by $B(\lambda/\mu)$.
\end{itemize}
Here, $\sum_{i \ge 2}(\lambda_i - \mu_i)$ is the number of boxes in the second row or below in the diagram of $\lambda/\mu$. 
\end{dfn}

\begin{ex}
For $\lambda = (6, 3, 3, 1, 1)$ and $\mu = (2, 1)$, $m = {\rm min}\{\lambda_2, \sum_{i \ge 2}(\lambda_i - \mu_i)\} = {\rm min}\{3, 7\} = 3$ and then $A_r(\lambda/\mu)$ consists of the boxes $\{(1, m + \mu_1), \dots, (1, \lambda_1)\} = \{(1, 5), (1, 6)\}$. Also, $n = {\rm min} \{\lambda'_2, \sum_{j \ge 2}(\lambda'_j - \mu'_j)\} = {\rm min}\{3, 8\} = 3$ and then $A_l(\lambda/\mu)$ consists of $\{(n + \mu'_1, 1), \dots, (\lambda'_1, 1)\} = \{(5, 1)\}$.    
$$
\vcenter{\hbox{
\begin{tikzpicture}[baseline=(current bounding box.center)]
\node at (0,0){ 
~\begin{ytableau}
\none & \none & *(black!10){} & *(black!10){} & {} & {} \\
\none & *(black!10){} & *(black!10){}  \\
*(black!10){} & *(black!10){} & *(black!10){} \\
*(black!10){} \\
{}
\end{ytableau}~};
\draw[line width=0.7pt] (-1.6, -1.2) -- (-2.5, -1.5) node[at={(-3.3, -1.5)}]{$A_l(\lambda/\mu)$};
\draw[line width=0.7pt] (1.5, 1.2) -- (2.5, 1.5) node[at={(3.3, 1.5)}]{$A_r(\lambda/\mu)$};
\draw[line width=0.7pt] (0.2, -0.2) -- (0.8, -0.8) node[at={(1.4, -0.8)}]{$B(\lambda/\mu)$};
\end{tikzpicture}. }}
$$

\end{ex}

For any set $X$ and any tableau $T \in T(\lambda/\mu, X)$, we define $A_r(T)$ as the part of $T$ consisting of the boxes in $A_r(\lambda/\mu)$ and entries in them. We call this the right-arm of $T$. We also define $A_l(T)$ and $B(T)$ similarly. Note that if $m + \mu_1> \lambda_1$ (resp. $n + \mu'_1 > \lambda'_1$), $A_r(\lambda/\mu)$ (resp. $A_l(\lambda/\mu)$) is empty.

\begin{lem}\label{skew_box_lem}
Let $L \in \SSYT(\lambda/\mu)$, $\tilde{L}=(\tilde{l}_{ij}) \coloneq \phi_T(L)$, and $\nu = \shape(\Rect(\tilde{L}))$. Then, 
\begin{enumerate}[rightmargin=0em]
\item $\Rect(\tilde{L})$ has $A_r(\tilde{L})$ in the right-most continuous boxes of the first row. That is, with $m = {\rm min}\{\lambda_2, \sum_{i \ge 2}(\lambda_i - \mu_i)\}$, the entries $\tilde{l}_{1, m + \mu_1}, \dots, \tilde{l}_{1, \lambda_1}$ are placed in the boxes $(1, m + \mu_1 - \lambda_1 + \nu_1), \dots, (1, \nu_1)$ in $\Rect(\tilde{L})$, respectively.
\item $\Rect(\tilde{L})$ has $A_l(\tilde{L})$ in the bottom continuous boxes of the first column. That is, with $n = {\rm min} \{\lambda'_2, \sum_{j \ge 2}(\lambda'_j - \mu'_j)\}$, the entries $\tilde{l}_{n + \mu'_1, 1}, \dots,  \tilde{l}_{\lambda'_1, 1}$ are placed in the boxes $(n + \mu'_1 - \lambda'_1  + \nu'_1, 1), \dots, (\nu'_1, 1)$ in $\Rect(\tilde{L})$, respectively. 
\end{enumerate}
\end{lem}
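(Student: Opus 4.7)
The plan is to prove statement (1) for the right-arm; statement (2) follows by the entirely analogous argument with rows and columns interchanged (first performing every slide whose inner corner lies in a column $\ge 2$, which keeps the dot in columns $\ge 2$ and so preserves column $1$).

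By the confluence of jeu de taquin---which follows from the fact that $\Rect(\tilde L)$ coincides with the unique normal tableau whose row-word is Knuth-equivalent to $w_{row}(\tilde L)$ (Theorems~\ref{inv_tilde} and~\ref{rect_word_tilde})---I may perform the slides in any order. First I perform every slide whose inner corner lies in a row $\ge 2$ of $\mu$. In such a slide the dot begins in a row $\ge 2$ and only moves to the right or downward, so it never enters row $1$; these slides therefore leave the first row of $\tilde L$ pointwise fixed and rectify the rows-$\ge 2$ portion of $\tilde L$ (viewed as a standalone skew tableau in its own right) to a normal tableau $N$ of some shape $\nu^{(2+)}$. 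Since the rectification of a skew tableau cannot extend its first row beyond the first row of the outer partition, $\nu^{(2+)}_1 \le \lambda_2$; combined with $|\nu^{(2+)}| = \sum_{i\ge 2}(\lambda_i-\mu_i)$ this yields $\nu^{(2+)}_1 \le m$.

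I then perform the remaining $\mu_1$ slides, all from inner corners in row $1$, and track the right-arm inductively. Each such slide is of exactly one of two types: (R) the dot moves only rightward through row $1$ and exits at its right end, which shifts every row-$1$ entry (including the right-arm) one column leftward and decreases both the current $\lambda_1$ and the current $\mu_1$ by one; or (D) the dot descends into row $2$ at some column $j_1$. Type (D) requires the cell below the dot to exist, so $j_1$ is at most the current length of row $2$; since row-$2$ length can only decrease as further slides are performed (outer corners are removed, never added), this length remains $\le \nu^{(2+)}_1 \le m$, and hence $j_1 \le m$. As at least one row-$1$ slide still remains, at most $\mu_1-1$ type (R) slides have occurred so far, so the current leftmost column of the right-arm is at least $m+\mu_1-(\mu_1-1) = m+1 > j_1$. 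Consequently the right-arm lies strictly to the right of column $j_1$ and is left untouched by the slide.

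Writing $k_R$ for the total number of type (R) slides among the $\mu_1$ row-$1$ slides, and noting that type (D) slides leave $\lambda_1^{\mathrm{curr}}$ unchanged, one has $\nu_1 = \lambda_1 - k_R$, so $k_R = \lambda_1-\nu_1$. Since each type (R) slide shifts the right-arm one column to the left while each type (D) slide leaves it fixed, the right-arm finishes at columns $(1,\, m+\mu_1-k_R), \dots, (1,\, \lambda_1-k_R) = (1,\, m+\mu_1-\lambda_1+\nu_1), \dots, (1,\, \nu_1)$, which is exactly statement~(1). The main obstacle is establishing the two-type dichotomy for the row-$1$ slides together with the bound $j_1 \le m$ in case (D); once these are secured, everything else is arithmetic bookkeeping of shape changes.
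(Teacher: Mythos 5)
Your proof is correct and shares the paper's opening move---first performing all slides whose inner corner lies in a row $\ge 2$, so that the first row is untouched and the rows-$\ge 2$ portion is fully rectified, and then bounding the resulting second-row length by $m$---but the two arguments diverge after that. The paper tracks \emph{entry positions}: it observes that row-$1$ entries only move left while all row-$\ge 2$ entries (confined to columns $\le \psi_2$) can only move left or up, hence can never land in a first-row cell with column $> \psi_2$; it then finishes with a case split on whether $\nu_1 > \lambda_1 - \mu_1$ or $\nu_1 = \lambda_1 - \mu_1$. You instead track the \emph{dot's path} in each of the $\mu_1$ remaining slides, classifying each as type (R) (exits at the right end of row $1$) or type (D) (descends into row $2$ at a column $j_1 \le m$), and argue that the right-arm, sitting strictly to the right of every descent point, is shifted one column left by exactly the type-(R) slides, whose count is forced to equal $\lambda_1 - \nu_1$ by outer-corner bookkeeping. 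This is an equally valid route and has the small advantage of handling the extreme case (all slides type (R), i.e.\ $\nu_1 = \lambda_1 - \mu_1$) uniformly rather than as a separate case. For part (2), the paper transposes $\tilde{L}$ and applies part (1), which is legitimate precisely because the labelled entries are pairwise distinct (so the transpose is again semistandard and elementary slides commute with transposition); your plan to rerun the row/column-symmetric argument is a fine alternative, though you could also cite the transposition trick to save the work.
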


\begin{ex}
$$
\vcenter{\hbox{
\begin{tikzpicture}[baseline=(current bounding box.center)]
\node at (0,0){ 
~\begin{ytableau}
\none & \none & *(black!10)1_1 & *(black!10)2_4 & 2_5 & 3_3 \\
\none & *(black!10)2_2 & *(black!10)2_3  \\
*(black!10)2_1 & *(black!10)3_2 & *(black!10)4_2 \\
*(black!10)3_1 \\
4_1
\end{ytableau}~};
\draw[line width=0.7pt] (-1.2, -1.2) -- (-0.8, -1.2) node[at={(-0.2, -1.2)}]{$A_l(\tilde{L})$};
\draw[line width=0.7pt] (1, 0.7) -- (1.5, 0.2) node[at={(2, 0.2)}]{$A_r(\tilde{L})$};
\draw[line width=0.7pt] (0.2, -0.2) -- (0.8, -0.8) node[at={(1.4, -0.8)}]{$B(\tilde{L})$};
\end{tikzpicture} }}
\stackrel{\text{jeu de taquin}}{\mapsto}
\vcenter{\hbox{
\begin{tikzpicture}[baseline=(current bounding box.center)]
\node at (0,0){ 
~\begin{ytableau}
*(black!10)1_1 & *(black!10)2_2& *(black!10)2_3 & *(black!10)2_4 & 2_5 & 3_3 \\
*(black!10)2_1 & *(black!10)3_2 & *(black!10)4_2\\
*(black!10)3_1 \\
4_1
\end{ytableau}~};
\draw[line width=0.7pt] (-1.2, -0.9) -- (-0.8, -0.9) node[at={(-0.2, -0.9)}]{$A_l(\tilde{L})$};
\draw[line width=0.7pt] (1, 0.7) -- (1.3, 0.1) node[at={(1.8, 0.1)}]{$A_r(\tilde{L})$};
\end{tikzpicture}. }}$$

\end{ex}

\begin{proof}[Proof of Lemma \ref{skew_box_lem}]
Consider the process of jeu de taquin which is carried out starting from the slide of the right-most entry of the bottom row of the smaller (deleted) diagram in each step.
$$
~\begin{ytableau}
*(black!40){} & *(black!40)\cdots & *(black!40)\cdots & *(black!40){} & {\ast} & \none[\cdots]\\
*(black!40){} & *(black!40)\cdots & *(black!40)\cdots & *(black!40){} & {\ast} & \none[\cdots]\\
*(black!40){} & *(black!40)\cdots & *(black!40)\bullet & {\ast} & \none[\cdots] \\
{\ast} & \none[\cdots] \\
\none[\vdots] 
\end{ytableau}~
\leadsto
\cdots 
\leadsto
~\begin{ytableau}
*(black!40){} & *(black!40)\cdots & *(black!40)\cdots & *(black!40){} & {\ast} & \none[\cdots]\\
*(black!40){} & *(black!40)\cdots & *(black!40)\cdots & *(black!40){} & {\ast} & \none[\cdots]\\
*(black!40)\bullet & {\ast} & \none[\cdots] \\
{\ast} & \none[\cdots] \\
\none[\vdots] 
\end{ytableau}~
\leadsto 
~\begin{ytableau}
*(black!40){} & *(black!40)\cdots & *(black!40)\cdots & *(black!40){} & {\ast} & \none[\cdots]\\
*(black!40){} & *(black!40)\cdots & *(black!40)\cdots & *(black!40)\bullet & {\ast} & \none[\cdots]\\
{\ast} & {\ast} & \none[\cdots] \\
\none[\vdots] & \none[\cdots]
\end{ytableau}~
\leadsto
\cdots.
$$
Now, suppose that we complete the slides from the second row and below. Let $\tilde{L}'$ be the resulting tableau after these slides have been completed. By the definition of slide, the first row of the tableau is not changed throughout this process. Therefore, the shape of $\tilde{L}'$ is $\psi/(\mu_1)$ for some normal shape $\psi = (\psi_1, \dots, \psi_r)$ and the shape $(\mu_1)$ which consists of one row.
$$
\tilde{L'} = 
\ytableausetup{boxsize=1.7em}
\begin{tikzpicture}[baseline=(current bounding box.center)]
\node at (0,0){ 
~\begin{ytableau}
*(black!40){} & *(black!40)\cdots & *(black!40)\cdots & *(black!40){} & \tilde{l}_{1i} & \cdots & \tilde{l}_{1 \lambda_1}\\
\ast & \ast & \ast & \none[\cdots]\\
\ast & \ast & \none[\cdots]\\
\none[\vdots] & \none[\vdots]
\end{ytableau}~
};
\node at (1.6, 0.48) {\small ($i = \mu_1 + 1$)};
\end{tikzpicture} 
\in \SSYT_{\tilde{\N}}(\psi). 
$$
Since the entire process of a slide is contained within the shape $\lambda$, the length of any row in $\tilde{L}'$ cannot exceed the length of the corresponding row in $\lambda$. In particular, for the second row, we must have $\psi_2 \le \lambda_2$.
Furthermore, the $\psi_2$ entries in the second row of $\tilde{L}'$ must originate from the second row or below in the original tableau $\tilde{L}$. The total number of entries in these rows is $\sum_{i \ge 2}(\lambda_i -\mu_i)$. 
Therefore, we have $\psi_2 \le \sum_{i \ge 2}(\lambda_i - \mu_i)$.
Combining $\psi_2 \le \lambda_2$ and $\psi_2 \le \sum_{i \ge 2}(\lambda_i - \mu_i)$, we obtain that $\psi_2 \le {\rm min}\{\lambda_2, \sum_{i \ge 2}(\lambda_i - \mu_i)\}$. 
Let $m = {\rm min}\{\lambda_2, \sum_{i \ge 2}(\lambda_i - \mu_i)\}$.

Now, let us consider the remaining slides of jeu de taquin on $\tilde{L}$, which is equivalent to the process of jeu de taquin on $\tilde{L}'$. In this process, the entries in the first row slide to the left of the first row or remain in their position, and their relative horizontal order is preserved. 
Since the second row of $\tilde{L}'$ has length $\psi_2$, there are no entries in the $(i, j)$-box with $i \ge  2$ and $j \ge \psi_2 + 1$ in $\tilde{L}'$. 
Since an elementary slide moves an entry only into an adjacent box to its left or above it, no entry from the second row or below can move into the $(1, j)$-box in $\tilde{L}'$ with $j \ge \psi_2 + 1$. 
Then, we obtain that the entries $\tilde{l}_{1, \psi_2 + 1 - \nu_1 + \lambda_1}, \dots, \tilde{l}_{1\lambda_1}$ are placed in the boxes $(1, \psi_2 + 1), \dots, (1, \nu_1)$, respectively, where $\nu = \shape(\Rect(\tilde{L}))$.

It follows that $\nu_1 \ge \lambda_1 - \mu_1$, since all entries from the first row of $\tilde{L}$ move into the first row of $\Rect(\tilde{L})$ by jeu de taquin.  
We now consider two cases: $\nu_1 > \lambda_1 - \mu_1$ and $\nu_1 = \lambda_1 - \mu_1$ separately.
If $\nu_1 > \lambda_1 - \mu_1$, then we have the following calculation: 
\begin{align*}
\psi_2 + 1 - \nu_1 + \lambda_1 
&\le \psi_2 + (\nu_1 - \lambda_1 + \mu_1) - \nu_1 + \lambda_1 \\
&= \psi_2 + \mu_1 \\
&\le m + \mu_1
\end{align*}
by $1 \le \nu_1 - \lambda_1 + \mu_1$ and $\psi_2 \le m$. Then, we have that the entries $\tilde{l}_{1, m + \mu_1}, \dots, \tilde{l}_{1\lambda_1}$ are placed in the boxes $(1, m + \mu_1 - \lambda_1 + \nu_1), \dots, (1, \nu_1)$ in $\Rect(\tilde{L})$, respectively. 
Next, we suppose that $\nu_1 = \lambda_1 - \mu_1$. In this case, the process of jeu de taquin on $\tilde{L}'$ was sliding the entries only leftward. Therefore, the entries $\tilde{l}_{1\mu_1}, \dots, \tilde{l}_{1\lambda_1}$ are placed in the boxes $(1, 1), \dots, (1, \nu_1)$ in $\Rect(\tilde{L})$, respectively. In particular, we have that the entries $\tilde{l}_{1, m + \mu_1}, \dots, \tilde{l}_{1 \lambda_1}$ are placed in the boxes $(1, m + \mu_1 - \lambda_1 + \nu_1), \dots, (1, \nu_1)$ in $\Rect(\tilde{L})$, respectively. 
We conclude the first statement of Lemma \ref{skew_box_lem}. 

The set of entries in $\tilde{L}$ is pairwise distinct. Therefore, the operation of slides are commutative with the operation of transposition. Operating jeu de taquin on the transposition $\tilde{L}^t$, applying the first statement of this lemma to it, and finally  transposing again, we have the second statement of Lemma \ref{skew_box_lem}.
\end{proof}

\subsection{Proof of Theorem \ref{skew}}\label{proof_skew_subs}

Let $\bm{v} = (v_{ij}) \in T(\lambda/\mu, \C), L \in \SSYT(\lambda/\mu)$. Let $\nu = \shape(\Rect(L))$. We define a new tableau $\bm{v}_L \in T(\nu, \C)$ as follows. 
Let $\tilde{L} = \phi_T(L)$. The definition of $\phi_T$ ensures that the set of all entries in $\tilde{L}$ is pairwise distinct. Since the operation of jeu de taquin preserves the set of all entries in $\tilde{L}$, there is a natural bijection $\rho_L : D(\lambda/\mu) \to D(\nu)$, 
which maps $(i, j) \in D(\lambda/\mu)$ to the box in $\Rect(\tilde{L})$ where $\tilde{l}_{ij}$ move into by jeu de taquin on $\tilde{L}$. This means that $\Rect(\tilde{L}) = (\tilde{l}_{\rho_L^{-1}(ij)})_{(i, j) \in D(\nu)}$ where $\tilde{L} = (\tilde{l}_{ij}).$
Using $\rho_L$, we define $\bm{v}_{L} = (v_{\rho_L^{-1}(ij)})_{(i, j) \in D(\nu)}$. It means that we place the entry  $v_{i_0j_0}$ into the box at $(i_1, j_1)$ in $\bm{v}_{L}$ when $(i_0, j_0)$ in $\tilde{L}$ is mapped to $(i_1, j_1)$ in $\Rect(\tilde{L})$. 
Note that $\bm{v}_L$ has the same shape as $\Rect(L)$ and its entries are complex variables from the set $\{v_{ij}\}$, with each variable appearing exactly once. 

\begin{ex}
For $L =
~\begin{ytableau}
*(black!40){} & 2 \\
1 & 3 \\
2
\end{ytableau}~$ and 
$\bm{v} =
~\begin{ytableau}
*(black!40){} & v_{12} \\
v_{21} & v_{22} \\
v_{31}
\end{ytableau}~$,

\begin{align*}
\phi_T(L) 
=
~\begin{ytableau}
*(black!40){} & 2_2 \\
1_1 & 3_1 \\
2_1
\end{ytableau}~ \stackrel{\text{jeu de taquin}}{\mapsto}
~\begin{ytableau}
1_1 & 2_2 \\
2_1 & 3_1 
\end{ytableau}~
=\Rect(\phi_T(L)).\\
\end{align*}
Then,
\begin{align*}
\bm{v} 
=
~\begin{ytableau}
*(black!40){} & v_{12} \\
v_{21} & v_{22} \\
v_{31}
\end{ytableau}~\qquad \mapsto \qquad
~\begin{ytableau}
v_{21} & v_{12} \\
v_{31} & v_{22} 
\end{ytableau}~
=\bm{v}_L. \qquad
\end{align*}

\end{ex}
We define $\mathcal{G}(\lambda/\mu)$ as the set of all partitions $\nu$ such that $c_{\mu\nu}^\lambda>0$.  
Also we define $U_\nu(\bm{v}) = \{\bm{v}_L\>|\>\shape(\Rect(L))=\nu\}.$ 
By Lemma \ref{skew_box_lem} and the definition of $\bm{v}_L$, we have the following corollary.

\begin{cor}\label{skew_box_cor}
Let $L \in \SSYT(\lambda/\mu)$ and $\bm{v} = (v_{ij}) \in T(\lambda/\mu)$. Then, 
\begin{enumerate}[rightmargin=0em]
\item $\bm{v}_L$ has $A_r(\bm{v})$ in the right-most continuous boxes of the first row, i.e. with $m = {\rm min}\{\lambda_2, \sum_{i \ge 2}(\lambda_i - \mu_i)\}$, the entries $v_{1, m + \mu_1}, \dots, v_{1\lambda_1}$ are placed in the boxes $(1, m + \mu_1 - \lambda_1 + \nu_1), \dots, (1, \nu_1)$ in $\bm{v}_L$, respectively.
\item $\bm{v}_L$ has $A_l(\bm{v})$ in the bottom continuous boxes of the first column, i.e. with $n = {\rm min} \{\lambda'_2, \sum_{j \ge 2}(\lambda'_j - \mu'_j)\}$, the entries $v_{n + \mu'_1, 1}, \dots,  v_{\lambda'_1 1}$ are placed in the boxes $(n + \mu'_1 - \lambda'_1  + \nu'_1, 1), \dots, (\nu'_1, 1)$ in $\bm{v}_L$, respectively. 
\end{enumerate}
\end{cor}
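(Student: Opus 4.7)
The plan is to derive this corollary as a direct translation of Lemma \ref{skew_box_lem} through the bijection $\rho_L$ that underlies the definition of $\bm{v}_L$. No new combinatorial content is required; the heart of the matter was already established in the lemma.

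First I would recall the setup: by construction $\bm{v}_L = (v_{\rho_L^{-1}(ij)})_{(i,j)\in D(\nu)}$, where $\rho_L\colon D(\lambda/\mu)\to D(\nu)$ is the bijection that sends a box $(i_0,j_0)$ to the box of $\Rect(\tilde{L})$ in which the distinctly-labeled entry $\tilde{l}_{i_0 j_0}$ of $\tilde{L}=\phi_T(L)$ ends up after jeu de taquin. Equivalently, the formal variable $v_{i_0 j_0}$ appears in $\bm{v}_L$ at position $\rho_L(i_0,j_0)$. The key observation is that $\rho_L$ is a map between box sets and is defined purely in terms of how jeu de taquin acts on the underlying labeled tableau, so we may read off positional information about $\bm{v}_L$ from positional information about $\Rect(\tilde{L})$.

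For statement (1), I would apply Lemma \ref{skew_box_lem}(1): with $m=\min\{\lambda_2,\sum_{i\ge 2}(\lambda_i-\mu_i)\}$, the entries $\tilde{l}_{1,m+\mu_1},\dots,\tilde{l}_{1,\lambda_1}$ occupy the boxes $(1,m+\mu_1-\lambda_1+\nu_1),\dots,(1,\nu_1)$ of $\Rect(\tilde{L})$, respectively. By the very definition of $\rho_L$, this says
\[
\rho_L(1,m+\mu_1+k)=(1,m+\mu_1-\lambda_1+\nu_1+k)\qquad(0\le k\le \lambda_1-m-\mu_1).
\]
Since the boxes $(1,m+\mu_1),\dots,(1,\lambda_1)$ on the left-hand side are exactly the boxes of $A_r(\lambda/\mu)$, transporting through $\rho_L^{-1}$ places the entries $v_{1,m+\mu_1},\dots,v_{1,\lambda_1}$ of $A_r(\bm{v})$ into the claimed rightmost contiguous segment of the first row of $\bm{v}_L$. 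Statement (2) is obtained by the identical argument using Lemma \ref{skew_box_lem}(2) in place of (1).

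There is essentially no obstacle here beyond bookkeeping; the only subtle point worth making explicit is that $\rho_L$ depends only on $L$ (through $\tilde{L}$), not on the formal variable entries $v_{ij}$, so the positional conclusions of Lemma \ref{skew_box_lem} about $\Rect(\tilde{L})$ transfer verbatim to $\bm{v}_L$.
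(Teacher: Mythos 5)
Your argument is correct and matches the paper's intent exactly: the paper states the corollary as an immediate consequence of Lemma \ref{skew_box_lem} together with the definition of $\bm{v}_L$ via the bijection $\rho_L$, which is precisely the transport-of-positions argument you spell out. No further comment is needed.
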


\begin{proof}[Proof of Theorem \ref{skew}]
Let $\bm{u}_{\nu}(\bm{v})\in U_\nu(\bm{v})$ for $\nu \in\mathcal{G}(\lambda/\mu)$.
For $L \in \SSYT(\lambda/\mu)$ such that shape of $\Rect(L))$ is $\nu$, we have
\begin{align*}
\frac{1}{L^{\bm{v}}}
&=\frac{1}{(\phi_T^{-1}(\phi_T (L)))^{\bm{v}} }\\
&=\frac{1}{(\phi_T^{-1}{(\Rect (\phi_T (L))))}^{\bm{v}_L} }\\
&=\frac{1}{{\Rect(L)}^{\bm{v}_{L}} }
\end{align*}
where the second equality is obtained by the definition of $\bm{v}_{L}$ and the third is by  Proposition \ref{rect_phi} : $\Rect(\phi_T(L)) = \phi_T(\Rect(L)).$

From Corollary \ref{skew_box_cor}, 
any $\bm{v}_L \in U_\nu(\bm{v})$ has $A_r(\bm{v})$ in the right-most contiguous boxes in the first row, and also has $A_l(\bm{v})$ in the bottom contiguous boxes in the first column. The entries in $B(\bm{v})$ appere exactly once in the parts obtained by removing $A_r(\bm{v})$ and $A_l(\bm{v})$ from $\bm{v}_L$. 
Therefore, $\bm{v}_L$ and $\bm{u}_\nu(\bm{v})$ can be mapped to each other by permuting variables in $B(\bm{v})$. 
Thus, we have
$$\sum_{\Sym(B(\bm{v}))} \frac{1}{{\Rect(L)}^{\bm{v}_L} }
=\sum_{\Sym(B(\bm{v}))}\frac{1}{{\Rect(L)}^{{\bm{u}}_\nu(\bm{v})}}$$
for any $L \in \SSYT(\lambda/\mu)$ with $\shape(\Rect(L))=\nu$.
Then, 
\begin{align*}
\sum_{\Sym(B(\bm{v}))}\zeta_{\lambda/\mu}(\bm{v})
&=\sum_{\Sym(B(\bm{v}))}\sum_{L \in \SSYT(\lambda/\mu)}\frac{1}{L^{\bm{v}}}\\
&=\sum_{\Sym(B(\bm{v}))}\sum_{L \in \SSYT(\lambda/\mu)}\frac{1}{{\Rect(\it L)}^{\bm{v}_L}}\\
&=\sum_{\Sym(B(\bm{v}))}\sum_{L \in \SSYT(\lambda/\mu)}\frac{1}{{\Rect(L)}^{\bm{u}_{\shape(\Rect(L))}(\bm{v})}}\\
&=\sum_{\Sym(B(\bm{v}))}\sum_{\nu \in \mathcal{G}(\lambda/\mu)}\sum_{M \in \SSYT(\nu)}c_{\mu\nu}^\lambda\frac{1}{M^{\bm{u}_\nu(\bm{v})}}\\
&=\sum_{\Sym(B(\bm{v}))}\sum_{\nu \in \mathcal{G}(\lambda/\mu)}c_{\mu\nu}^\lambda \zeta_\nu(\bm{u}_\nu(\bm{v}))
\end{align*}
where the fourth equality is from the Theorem \ref{LRrule2}.
It concludes Theorem \ref{skew}.
\end{proof}

\begin{ex}
For $\lambda=(7,3,1,1), \mu=(2,1)$ and $\bm{v} \in T(\lambda/\mu, \C)$, the following equality is derived from Theorem \ref{skew}. For simplicity, we replace all variables in $B(\bm{v})$ with the same variables $v_0$ and replace the double subscripts of other variables with single subscripts.
\ytableausetup{boxsize=1.4em}
\begin{align*}
\zeta_{\lambda/\mu} &
\left(\vcenter{\hbox{
~\begin{ytableau}
\none & \none & v_0 & v_0 & v_1 & v_2 & v_3 \\
\none & v_0 & v_0 \\
v_0 \\
v_4
\end{ytableau}~
}}\right)\\
& = \zeta_{(7,2)}
\left(\vcenter{\hbox{
~\begin{ytableau}
v_0 & v_0 & v_0 & v_0 & v_1 & v_2 & v_3 \\
v_4 & v_0
\end{ytableau}~
}}\right)
+ \zeta_{(7,1,1)}
\left(\vcenter{\hbox{
~\begin{ytableau}
v_0 & v_0 & v_0 & v_0 & v_1 & v_2 & v_3 \\
v_0 \\
v_4 
\end{ytableau}~
}}\right)\\
& + \zeta_{(6,3)}
\left(\vcenter{\hbox{
~\begin{ytableau}
v_0 & v_0 & v_0 & v_1 & v_2 & v_3 \\
v_4 & v_0 & v_0 
\end{ytableau}~
}}\right)
+ 2\zeta_{(6,2,1)}
\left(\vcenter{\hbox{
~\begin{ytableau}
v_0 & v_0 & v_0 & v_1 & v_2 & v_3 \\
v_0 & v_0 \\
v_4
\end{ytableau}~
}}\right)\\
&+ \zeta_{(6,1,1,1)}
\left(\vcenter{\hbox{
~\begin{ytableau}
v_0 & v_0 & v_0 & v_1 & v_2 & v_3 \\
v_0 \\
v_0 \\
v_4
\end{ytableau}~
}}\right)
+ \zeta_{(5,3,1)}
\left(\vcenter{\hbox{
~\begin{ytableau}
v_0 & v_0 & v_1 & v_2 & v_3 \\
v_0 & v_0 & v_0 \\
v_4
\end{ytableau}~
}}\right)\\
& + \zeta_{(5,2,1,1)}
\left(\vcenter{\hbox{
~\begin{ytableau}
v_0 & v_0 & v_1 & v_2 & v_3 \\
v_0 & v_0 \\
v_0 \\
v_4
\end{ytableau}~
}}\right)
\end{align*}
by 
$c_{\mu(7,2)}^\lambda 
= c_{\mu(7,1,1)}^\lambda 
= c_{\mu(6,3)}^\lambda 
= c_{\mu(6,1,1,1)}^\lambda 
= c_{\mu(5,3,1)}^\lambda 
= c_{\mu(5,2,1,1)}^\lambda = 1$, 
$c_{\mu(6,2,1)}^\lambda = 2$ and $c_{\mu\nu}^\lambda = 0$ for the other partitions $\nu$.

\end{ex}

\subsection{Proof of Theorem \ref{main}}

To prove Theorem \ref{main}, we define the skew shape $\mu \ast \nu$ for partitions $\mu = (\mu_1, \dots, \mu_r), \nu = (\nu_1, \dots, \nu_s)$ with $r = l(\mu), s = l(\nu)$. 
We define $\mu \ast \nu$  by taking a rectangle of empty squares with $\mu_1$ columns and $\nu'_1$ rows (for the smaller diagram), and placing diagram $\mu$ below and $\nu$ to the right of this rectangle.

For a set $X$, let $S \in T(\mu, X), T \in T(\nu, X)$.
We define $S \ast T \in T(\mu \ast \nu, X)$ by placing $S$ below and $T$ to the right of the rectangle. 
For any $M_0 \in \SSYT(\mu)$ and $M_1 \in \SSYT(\nu)$, it is obvious that $M_0 \ast M_1$ is a semi-standard Young tableau.

\begin{ex}
For $\mu = (2, 1), \nu = (3, 2)$, 
$M_0 = 
~\begin{ytableau}
1 & 2 \\
3
\end{ytableau}~$, 
$M_1 = 
~\begin{ytableau}
1 & 2 & 4\\
2 & 3
\end{ytableau}~$, 

\begin{align*}
\mu \ast \nu 
= ~\begin{ytableau}
*(black!40){} & *(black!40){} & {} & {} & {} \\
*(black!40){} & *(black!40){} & {} & {} \\
{} & {} \\
{}
\end{ytableau}~,\  
M_0 \ast M_1 
= ~\begin{ytableau}
*(black!40){} & *(black!40){} & 1 & 2 & 4 \\
*(black!40){} & *(black!40){} & 2 & 3 \\
1 & 2 \\
3
\end{ytableau}~.
\end{align*}

\end{ex}

The following theorem is another combinatorial rule for computing the Littlewood-Richardson coefficients $c_{\mu\nu}^\lambda$.

\begin{thms}[cf. {\cite[\S5]{F}}]\label{LRrule}
Let $\lambda, \mu, \nu$ be partitions.
For any $L \in \SSYT(\lambda)$, 
$$\#\{M \in \SSYT(\mu \ast \nu) \ |\ {\rm Rect}(M) = L\} = c_{\mu\nu}^\lambda.$$
\end{thms}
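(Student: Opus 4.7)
The plan is to reduce Theorem \ref{LRrule} to the already-stated Theorem \ref{LRrule2} by realizing $\mu \ast \nu$ as a skew shape with a rectangular inner partition, and then matching two Littlewood-Richardson coefficients via the factorization $s_{\mu \ast \nu} = s_\mu s_\nu$.

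First, I would observe that $\mu \ast \nu$ is the skew shape $\Lambda/\Pi$, where $\Pi = (\mu_1^{\nu'_1})$ is the rectangle occupying the top-left and $\Lambda$ is the partition with $\Lambda_i = \mu_1 + \nu_i$ for $1 \le i \le \nu'_1$ and $\Lambda_i = \mu_{i-\nu'_1}$ for $\nu'_1 < i \le \nu'_1 + \mu'_1$. Applying Theorem \ref{LRrule2} to this skew shape immediately gives, for any $L \in \SSYT(\lambda)$,
\[
\#\{M \in \SSYT(\mu \ast \nu) \mid \Rect(M) = L\} = c_{\Pi\lambda}^{\Lambda}.
\]

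Next, I would prove the identity $c_{\Pi\lambda}^{\Lambda} = c_{\mu\nu}^{\lambda}$. Inside $\mu \ast \nu$ the columns occupied by $\mu$ (columns $1, \dots, \mu_1$) and by $\nu$ (columns $\mu_1+1, \dots, \mu_1+\nu_1$) are disjoint, so the semistandard condition decouples and gives a content-preserving bijection $M \leftrightarrow (M_0, M_1)$ between $\SSYT(\mu \ast \nu)$ and $\SSYT(\mu) \times \SSYT(\nu)$. At the level of symmetric functions this yields $s_{\Lambda/\Pi} = s_\mu s_\nu$. Expanding the left-hand side by \eqref{LR2} and the right-hand side by \eqref{LR1} gives $\sum_{\lambda} c_{\Pi\lambda}^{\Lambda}\, s_\lambda = \sum_{\lambda} c_{\mu\nu}^{\lambda}\, s_\lambda$, and linear independence of the Schur functions in $\Lambda$ (the ring of symmetric functions) forces $c_{\Pi\lambda}^{\Lambda} = c_{\mu\nu}^{\lambda}$, completing the argument.

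I do not expect any serious obstacle; the proof is essentially bookkeeping on top of Theorem \ref{LRrule2}. The only points requiring care are verifying the exact outer shape $\Lambda$ of $\mu \ast \nu$ and confirming that the pair decomposition $M = M_0 \ast M_1$ respects semistandardness, which holds precisely because entries of $M_0$ and of $M_1$ never share a column.
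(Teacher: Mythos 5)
Your proof is correct, but note that the paper does not actually supply a proof of Theorem \ref{LRrule}: it is cited directly from Fulton \cite{F}, \S 5. What you have done is derive \ref{LRrule} from \ref{LRrule2} via the standard factorization $s_{\mu \ast \nu} = s_\mu s_\nu$, which is a clean and legitimate reduction. In the proof of Theorem \ref{main} the paper in fact combines \ref{LRrule} with \ref{LRrule2} to assert the identity $c_{(\{\mu_1\}^{\nu'_1})\lambda}^{\psi} = c_{\mu\nu}^\lambda$; your argument establishes exactly that identity independently, by expanding $s_{\mu \ast \nu}$ in two ways and using linear independence of the Schur functions, and then obtains \ref{LRrule} as a corollary of \ref{LRrule2}. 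So the logical direction is reversed relative to the paper's usage, and your derivation would make the paper slightly more self-contained: one could cite only \ref{LRrule2} and the factorization and get both \ref{LRrule} and the coefficient identity. The one detail worth tightening is the decoupling step: for the bijection $M \leftrightarrow (M_0, M_1)$ to preserve semistandardness in both directions you need that the $\mu$-part and the $\nu$-part of $\mu \ast \nu$ share neither a column nor a row --- you explicitly mention only columns, but the row constraint also needs the two pieces to live in disjoint rows, which they do because the $\nu'_1 \times \mu_1$ rectangle separates them in both directions.
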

By Theorem \ref{LRrule2}, Theorem \ref{LRrule} and the definition of $\mathcal{G}$, $U_\lambda(\bm{s} \ast \bm{t}) \ne \emptyset$ for any $\lambda \in \mathcal{G}(\mu \ast \nu)$ and $(\bm{s}, \bm{t}) = ((s_{ij}), (t_{ij})) \in T(\mu,\mathbb{C}) \times T(\nu,\mathbb{C})$. 
We prove Theorem \ref{main} as the special case of Theorem \ref{skew}.

\begin{proof}[Proof of Theorem \ref{main}]

We denote $(v_{ij}) = \bm{s} \ast \bm{t}$. It means that we denote the entry in $(i, j)$ box of $\bm{s} \ast \bm{t}$ by $v_{ij}$ for each $(i, j) \in D(\mu \ast \nu)$. By the definition of $\bm{s} \ast \bm{t}$, $v_{ij} = s_{i - \nu'_1\ j}$ \ for $i \ge \nu'_1 + 1$ and $v_{ij} = t_{i\ j-\mu_1}$ for $j \ge \mu_1 + 1$. 
Let $\psi = (\psi_1, \dots, \psi_{l(\mu)+ l(\nu)})$ be the outer shape of $\mu \ast \nu$ i.e., $\psi_i = \mu_1 + \nu_i$ for $1 \le i \le l(\nu)$ and $\psi_i = \mu_{i - l(\nu)}$ for $i \ge l(\nu) + 1$. We denote the shape of rectangle with $\mu_1$ columns and $\nu'_1$ rows by $(\{\mu_1\}^{\nu'_1})$. 
By applying the first main theorem to the shape $\mu \ast \nu = \psi/(\{\mu_1\}^{\nu'_1})$, we have the following for any $(\bm{u}_{\lambda} \in U_{\lambda}(\bm{s} \ast \bm{t}))_{\lambda \in \mathcal{G}(\mu \ast \nu)}$: 

\begin{align*}
\sum_{\Sym(B(\bm{s} \ast \bm{t}))} \zeta_{\mu \ast \nu}(\bm{s} \ast \bm{t})
=\sum_{\Sym(B(\bm{s} \ast \bm{t}))} \sum_{\lambda \in \mathcal{G}(\mu \ast \nu)} c_{(\{\mu_1\}^{\nu'_1}) \lambda}^{\psi}  \zeta_{\lambda}(\bm{u}_{\lambda}(\bm{s} \ast \bm{t})).
\end{align*}
By the definition of $B(\bm{v})$ for $\bm{v} = (v_{ij})$, $\Sym(B(\bm{s} \ast \bm{t}))$ is the symmetric group which acts on functions in variables $\{v_{ij}\}$, by permuting the variables except 
$v_{1j}$ with ${\rm min}\{\lambda_2, \sum_{i \ge 2}(\lambda_i - \mu_i)\} + \mu_1 \le j \le \lambda_1$ and 
$v_{i1}$ with ${\rm min} \{\lambda'_2, \sum_{j \ge 2}(\lambda'_j - \mu'_j)\} + \mu'_1 \le i \le \lambda'_1$

We rewrite this in terms of $s_{ij}$ and $t_{ij}$ by $v_{ij} = s_{i - \nu'_1\ j}$ for $i \ge \nu'_1 + 1$ and $v_{ij} = t_{i\ j-\mu_1}$ for $j \ge \mu_1 + 1$. 
Here, it is clear that ${\rm min}\{\psi_2, \sum_{i \ge 2}(\psi_i - \mu_1)\} = \psi_2 = \mu_1 + \nu_2$ because $\psi_2 = (\psi_2 - \mu_1)+ \psi_{\nu'_1 + 1} \le  \displaystyle{\sum_{i \ge 2}(\psi_i - \mu_1)}$. 
Therefore, $\Sym(B(\bm{s} \ast \bm{t}))$ is the symmetric group permuting the variables except  
$$\{s_{i1}\>|\>\mu'_2+\nu'_1\le i\le\mu'_1\}\sqcup\{t_{1j}\>|\>\nu_2+\mu_1\le j\le \nu_1\}.$$
Also we have that $c_{(\{\mu_1\}^{\nu'_1}) \lambda}^{\psi} = c_{\mu\nu}^\lambda$ by Theorem\ref{LRrule}. 
Then, we have
\begin{align*}
\sum_{\Sym(B(\bm{s} \ast \bm{t}))} \zeta_{\mu \ast \nu}(\bm{s} \ast \bm{t})
=\sum_{\Sym(B(\bm{s} \ast \bm{t}))} \sum_{\lambda \in \mathcal{G}(\mu \ast \nu)} c_{\mu\nu}^{\lambda} \zeta_{\lambda}(\bm{u}_{\lambda}(\bm{s} \ast \bm{t})).
\end{align*}
It is clear that  
$$M_0^{\bm{s}}M_1^{\bm{t}} = {M_0 \ast M_1}^{\bm{s} \ast \bm{t}}$$
for $M_0 \in \SSYT(\mu)$ and $M_1 \in \SSYT(\nu)$
Moreover, it is clear that the map $\SSYT(\mu) \times \SSYT(\nu) \ni (M_0, M_1) \mapsto M_0 \ast M_1 \in \SSYT(\mu \ast \nu)$ is a bijection. Then, we have
$$\zeta_{\mu}(\bm{s}) \zeta_{\nu}(\bm{t}) = \zeta_{\mu \ast \nu}(\bm{s} \ast \bm{t}),$$
and then Theorem \ref{main} holds.
\end{proof}

\begin{ex}
\ytableausetup{boxsize=1.4em}
For $\mu = (2, 2, 1, 1), \nu = (5, 2), \bm{s} \in T(\mu, \C)$ and $\bm{t} \in T(\nu, \C)$, the following equality is derived from Theorem \ref{main}.
For simplicity, we replace all variables in $B(\bm{s} \ast \bm{t})$ with the same variables $s_0$ and replace the double subscripts of other variables with single subscripts.
\begin{align*}
\zeta_\mu & \left(\vcenter{\hbox{
~\begin{ytableau}
s_0 & s_0 \\
s_0 & s_0 \\
s_0 \\
s_1 
\end{ytableau}~}}\right)
\zeta_\nu \left(\vcenter{\hbox{
~\begin{ytableau}
s_0 & s_0 & s_0 & t_1 & t_2 \\
s_0 & s_0 
\end{ytableau}~}}\right)
= \zeta_{\mu \ast \nu} \left(\vcenter{\hbox{
~\begin{ytableau}
*(black!40){} & *(black!40){} & s_0 & s_0 & s_0 & t_1 & t_2 \\
*(black!40){} & *(black!40){} & s_0 & s_0 \\
s_0 & s_0 \\
s_0 & s_0 \\
s_0 \\
s_1 
\end{ytableau}~}}\right)\\
&=
\zeta_{(7, 4, 1, 1)} \left(\vcenter{\hbox{
~\begin{ytableau}
s_0 & s_0 & s_0 & s_0 & s_0 & t_1 & t_2 \\
s_0 & s_0 & s_0 & s_0 \\
s_0 \\
s_1
\end{ytableau}~}}\right)
+
\zeta_{(7, 3, 2, 1)} \left(\vcenter{\hbox{
~\begin{ytableau}
s_0 & s_0 & s_0 & s_0 & s_0 & t_1 & t_2 \\
s_0 & s_0 & s_0 \\
s_0 & s_0\\
s_1
\end{ytableau}~}}\right)\\
&+
\zeta_{(6, 4, 2, 1)} \left(\vcenter{\hbox{
~\begin{ytableau}
s_0 & s_0 & s_0 & s_0 & t_1 & t_2 \\
s_0 & s_0 & s_0 & s_0 \\
s_0 & s_0\\
s_1
\end{ytableau}~}}\right)
+
\zeta_{(6, 4, 1, 1, 1)} \left(\vcenter{\hbox{
~\begin{ytableau}
s_0 & s_0 & s_0 & s_0 & t_1 & t_2 \\
s_0 & s_0 & s_0 & s_0 \\
s_0 \\
s_0 \\
s_1
\end{ytableau}~}}\right)\\
&+
\zeta_{(6, 3, 2, 2)} \left(\vcenter{\hbox{
~\begin{ytableau}
s_0 & s_0 & s_0 & s_0 & t_1 & t_2 \\
s_0 & s_0 & s_0 \\
s_0 & s_0 \\
s_1 & s_0 
\end{ytableau}~}}\right)
+
2 \zeta_{(6, 3, 2, 1, 1)} \left(\vcenter{\hbox{
~\begin{ytableau}
s_0 & s_0 & s_0 & s_0 & t_1 & t_2 \\
s_0 & s_0 & s_0 \\
s_0 & s_0 \\
s_0 \\
s_1
\end{ytableau}~}}\right)\\
&+
\zeta_{(6, 3, 1, 1, 1, 1)} \left(\vcenter{\hbox{
~\begin{ytableau}
s_0 & s_0 & s_0 & s_0 & t_1 & t_2 \\
s_0 & s_0 & s_0 \\
s_0 \\
s_0 \\
s_0 \\
s_1
\end{ytableau}~}}\right)
+
\zeta_{(6, 2, 2, 2, 1)} \left(\vcenter{\hbox{
~\begin{ytableau}
s_0 & s_0 & s_0 & s_0 & t_1 & t_2 \\
s_0 & s_0 \\
s_0 & s_0\\
s_0 & s_0\\
s_1
\end{ytableau}~}}\right)\\
&+
\zeta_{(6, 2, 2, 1, 1, 1)} \left(\vcenter{\hbox{
~\begin{ytableau}
s_0 & s_0 & s_0 & s_0 & t_1 & t_2 \\
s_0 & s_0 \\
s_0 & s_0\\
s_0 \\
s_0 \\
s_1
\end{ytableau}~}}\right)
+
\zeta_{(5, 4, 2, 1, 1)} \left(\vcenter{\hbox{
~\begin{ytableau}
s_0 & s_0 & s_0 & t_1 & t_2 \\
s_0 & s_0 & s_0 & s_0 \\
s_0 & s_0\\
s_0 \\
s_1
\end{ytableau}~}}\right)\\
&+
\zeta_{(5, 3, 2, 2, 1)} \left(\vcenter{\hbox{
~\begin{ytableau}
s_0 & s_0 & s_0 & t_1 & t_2 \\
s_0 & s_0 & s_0 \\
s_0 & s_0 \\
s_0 & s_0 \\
s_1
\end{ytableau}~}}\right)
+
\zeta_{(5, 2, 2, 2, 1, 1)} \left(\vcenter{\hbox{
~\begin{ytableau}
s_0 & s_0 & s_0 & t_1 & t_2 \\
s_0 & s_0 \\
s_0 & s_0 \\
s_0 & s_0 \\
s_0 \\
s_1 
\end{ytableau}~}}\right)
\end{align*}
by $c_{\mu\nu}^{(7,4,1,1)}=c_{\mu\nu}^{(7,3,2,1)}=c_{\mu\nu}^{(6,4,2,1)}=c_{\mu\nu}^{(6,4,1,1,1)}=c_{\mu\nu}^{(6,3,2,2)}=c_{\mu\nu}^{(6,3,1,1,1,1)}=c_{\mu\nu}^{(6,2,2,2,1)}=c_{\mu\nu}^{(6,2,2,1,1,1)}=c_{\mu\nu}^{(5,4,2,1,1)}=c_{\mu\nu}^{(5,3,2,2,1)}=c_{\mu\nu}^{(5,2,2,2,1,1)}=1$, $c_{\mu\nu}^{(6,3,2,1,1)}=2$ and $c_{\mu\nu}^\lambda=0$ for the other partitions $\lambda$.
\end{ex}

\section{Generalization of Theorem \ref{skew}}

In this section, we generalize Theorem \ref{skew}.
Let $\alpha = (\alpha_1, \alpha_2, \dots), \beta = (\beta_1, \beta_2, \dots)$ and $\delta = (\delta_1, \delta_2, \dots)$ be normal or skew partitions. When they hold the following conditions, we define a new partition $[\alpha|_{l_0} \delta|_{l_1} \beta]$ via its diagram.\\

\noindent \textbf{Conditions}\\[-7mm]
\begin{enumerate}
\item[{[}W1{]}] $D(\alpha)$ has equal to or more than $l_0$ boxes in the right-most column.
\item[{[}W2{]}] $D(\beta)$ has equal to or more than $l_1$ boxes in the bottom row.
\item[{[}W3{]}] $D(\delta)$ has equal to or more than $l_0$ boxes in the left-most column, and equal to or more than $l_1$ boxes in the top row.
\end{enumerate}

We assume that $\alpha$ and $\beta$ are allowed to be $\emptyset$ and then for any $\delta$ and $l_0 = 0$ (resp. $l_1 = 0$) with $\alpha = \emptyset$ (resp. $\beta = \emptyset$) satisfy this condition.

Then, we construct the diagram of $[\alpha|_{l_0} \delta|_{l_1} \beta]$ as follows: first, we paste the right-edge of the highest contiguous $l_0$ boxes in the right-most column in $D(\alpha)$ to the left-edge of the lowest contiguous $l_0$ boxes in the left-most column of $D(\delta)$. Second, we paste the bottom-edge of the left-most contiguous $l_1$ boxes in the bottom row in $D(\beta)$ to the top-edge of the right-most contiguous $l_1$ boxes in the top-row of $D(\delta)$. 

Let $X$ be a set, and let partitions $\alpha, \beta, \delta$ be satisfying the above conditions. For $P \in T(\alpha, X), Q \in T(\beta, X)$ and $T \in T(\delta, X)$, we define $[P|_{l_0} T|_{l_1} Q] \in T([\alpha|_{l_0} \delta|_{l_1} \beta], X)$ as follows: pasting the right-edge of the highest contiguous $l_0$ boxes in the right-most column in $P$ to the left-edge of the lowest contiguous $l_0$ boxes in the left-most column in $T$, and pasting the bottom-edge of the left-most contiguous $l_1$ boxes in the bottom row in $Q$ to the top-edge of the right-most contiguous $l_1$ boxes in the top row in $T$. 
\begin{ex}
\ytableausetup{boxsize=1.2em}
For $\alpha = (2,2,1), \beta = (4,3,3)/(2,1), \delta = (4,4,2)$,
\begin{align*}
[\alpha|_1 \delta|_2 \beta]
= 
\begin{tikzpicture}[baseline=(current bounding box.center)]
\node at (0,0){ 
~\begin{ytableau}
\none & \none & \none & \none & \none & \none & {} & {} \\
\none & \none & \none & \none & \none & {} & {} \\
\none & \none & \none & \none & {} & {} & {} \\
\none & \none & *(black!10){} & *(black!10){} & *(black!10){} & *(black!10){} \\
\none & \none & *(black!10){} & *(black!10){} & *(black!10){} & *(black!10){} \\
{} & {} & *(black!10){} & *(black!10){} \\
{} & {} \\
{}
\end{ytableau}~\ \ };
\draw[line width=2pt] (-0.05,0.5) -- (0.93, 0.5);
\draw[line width=2pt] (-1.03, -0.5) -- (-1.03, -0.95);
\draw[line width=0.7pt] (1, 1.2) -- (2.3, 1.5) node[at={(2.6, 1.5)}]{$\beta$};
\draw[line width=0.7pt] (-1.6, -1.2) -- (-2.5, -2) node[at={(-2.8, -2)}]{$\alpha$};
\draw[line width=0.7pt] (-0.3, -0.6) -- (1, -1.2) node[at={(1.3, -1.2)}]{$\delta$};
\end{tikzpicture}.
\end{align*}

\end{ex}

\begin{rmk}
In \cite{MN}, the tableau $[\alpha |_1 \lambda |_1 \beta]$ is denoted by $[\alpha | \lambda | \beta]$ and called {\it laced tableau} for non-skew or skew partition (or shape) $\alpha, \beta, \delta$. We define the notation $[\alpha |_{l_0} \lambda |_{l_1} \beta]$ inspired by that of laced tableau. 
\end{rmk}

For the case where $\delta$ is a skew partition $\lambda/\mu$, we introduce a new condition. \\

\noindent \textbf{Condition}
\begin{enumerate}
\item[{[}W4{]}] $D(\lambda/\mu)$ has equal to or more than $l_0$ boxes in its left-arm $A_l(\lambda/\mu)$, and equal to or more than $l_1$ boxes in its right-arm $A_r(\lambda/\mu)$.
\end{enumerate}

Then, we have the following theorem. 

\begin{thms}\label{wing}
Let $\lambda/\mu$ be a skew partition, $\alpha, \beta$ be normal or skew partitions, and   $l_0, l_1 \in \N$ such that they satisfy the conditions {\rm [W1]}, {\rm [W2]} and {\rm [W4]}. 
Let $\bm{v} = (v_{ij}) \in T(\lambda/\mu, \C), \bm{a} = (a_{ij}) \in T(\alpha, \C), \bm{b} \in T(\beta, \C)$ be variables. 
Assume that the real parts of the variables 
$v_{1j}$ with ${\rm min}\{\lambda_2, \sum_{i \ge 2}(\lambda_i - \mu_i)\} + \mu_1 \le  j \le \lambda_1 - 1$, 
$v_{i1}$ with ${\rm min} \{\lambda'_2, \sum_{j \ge 2}(\lambda'_j - \mu'_j)\} + \mu'_1 \le i \le \lambda'_1 - 1$, 
and $a_{ij}, b_{ij}$ which are not in the corner of $[\alpha|_{l_0} \lambda/\mu|_{l_1} \beta]$  
are greater than or equal to 1. Assume that the real parts of all except these are greater than 1.
The following equality holds for any $(\bm{u}_\nu(\bm{v}) \in U_{\nu}(\bm{v}))_{\nu \in \mathcal{G}(\lambda/\mu)}$:
\begin{align*}
\sum_{\Sym(B(\bm{v}))} \zeta_{[\alpha|_{l_0} \lambda/\mu|_{l_1} \beta]}([\bm{a}|_{l_0} \bm{v} |_{l_1} \bm{b}])
=\sum_{\Sym(B(\bm{v}))} \sum_{\nu \in \mathcal{G}(\lambda/\mu)} c_{\mu\nu}^{\lambda}\zeta_{\nu}([\bm{a}|_{l_0} \bm{u}_{\nu}(\bm{v}) |_{l_1} \bm{b}]).
\end{align*}
\end{thms}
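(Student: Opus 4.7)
The plan is to follow the proof of Theorem \ref{skew} essentially verbatim, treating the wings as inert spectators: we perform the rectification only on the $\lambda/\mu$ portion of each tableau while the entries of $\alpha$ and $\beta$ remain frozen in place.

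First I would decompose each $M\in\SSYT([\alpha|_{l_0}\lambda/\mu|_{l_1}\beta])$ as $M=[P|_{l_0}L|_{l_1}Q]$ with $P\in\SSYT(\alpha)$, $L\in\SSYT(\lambda/\mu)$, $Q\in\SSYT(\beta)$, and apply jeu de taquin to $L$ alone, obtaining $\Rect(L)$ of some normal shape $\nu\in\mathcal{G}(\lambda/\mu)$ together with the rearranged variable tableau $\bm{v}_L$ from \S3.2. The central structural claim is that $[P|_{l_0}\Rect(L)|_{l_1}Q]$ is again semi-standard, of shape $[\alpha|_{l_0}\nu|_{l_1}\beta]$, and that the resulting map $M\mapsto(P,\Rect(L),Q)$ is surjective with fibres of size $c_{\mu\nu}^{\lambda}$ over any compatible triple. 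This combines Corollary \ref{skew_box_cor} with condition [W4]: [W4] places the $l_0$ attachment boxes of $\alpha$ inside $A_l(L)$ and the $l_1$ attachment boxes of $\beta$ inside $A_r(L)$, and the corollary then guarantees that these arm entries are carried by rectification to the bottom of the first column, respectively the right end of the first row, of $\Rect(L)$ with the same values and in the same relative order. Hence every inequality along the two attachment seams in $M$ transfers directly to $[P|_{l_0}\Rect(L)|_{l_1}Q]$; conversely, for any wings $P,Q$ and any $R\in\SSYT(\nu)$ for which $[P|_{l_0}R|_{l_1}Q]$ is semi-standard, every $L$ with $\Rect(L)=R$ reglues to a valid $M$, and the count of such $L$ is exactly $c_{\mu\nu}^{\lambda}$ by Theorem \ref{LRrule2}.

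Combining this bijection with the monomial identity
\begin{equation*}
M^{[\bm{a}|_{l_0}\bm{v}|_{l_1}\bm{b}]} \;=\; P^{\bm{a}}\cdot L^{\bm{v}}\cdot Q^{\bm{b}} \;=\; P^{\bm{a}}\cdot\Rect(L)^{\bm{v}_L}\cdot Q^{\bm{b}} \;=\; [P|_{l_0}\Rect(L)|_{l_1}Q]^{[\bm{a}|_{l_0}\bm{v}_L|_{l_1}\bm{b}]},
\end{equation*}
one rewrites $\zeta_{[\alpha|_{l_0}\lambda/\mu|_{l_1}\beta]}([\bm{a}|_{l_0}\bm{v}|_{l_1}\bm{b}])$ as a weighted sum of $\zeta$-values on the shapes $[\alpha|_{l_0}\nu|_{l_1}\beta]$, with coefficients $c_{\mu\nu}^{\lambda}$ and middle variables $\bm{v}_L$. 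Applying $\sum_{\Sym(B(\bm{v}))}$ then collapses every $\bm{v}_L$ to the chosen representative $\bm{u}_\nu(\bm{v})$ exactly as in the proof of Theorem \ref{skew}, since $\Sym(B(\bm{v}))$ permutes only the body entries of $\bm{v}$ and leaves $\bm{a}$, $\bm{b}$, and the arm entries of $\bm{v}$ pointwise fixed; in particular any two elements of $U_\nu(\bm{v})$ are related by such a permutation.

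The main obstacle is the semi-standardness and bijection step in the first paragraph: one must verify carefully that rectifying only the middle skew portion and regluing the unchanged wings always yields an SSYT of shape $[\alpha|_{l_0}\nu|_{l_1}\beta]$, and that the inverse assignment has fibres of the expected size $c_{\mu\nu}^{\lambda}$. Once Corollary \ref{skew_box_cor} is invoked this reduces to bookkeeping, but it is the crux of the argument and the only place where [W4] is essential. A secondary point to check is that the relaxed convergence hypotheses — real parts $\geq 1$ precisely on the arm boxes of $\bm{v}$ together with the non-corner wing entries — correspond to non-corner boxes of both the combined shape on the left and each $[\alpha|_{l_0}\nu|_{l_1}\beta]$ on the right, ensuring absolute convergence on both sides of the claimed identity.
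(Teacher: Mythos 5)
Your proposal is correct and follows the same strategy as the paper's own proof: decompose each $M=[P|_{l_0}L|_{l_1}Q]$, rectify only the middle $L$, use Lemma \ref{skew_box_lem}/Corollary \ref{skew_box_cor} together with condition [W4] to verify the seam inequalities in both directions so that $[P|_{l_0}\Rect(L)|_{l_1}Q]$ is semi-standard and the fibres have size $c_{\mu\nu}^{\lambda}$ by Theorem \ref{LRrule2}, then conclude via the monomial identity and the $\Sym(B(\bm{v}))$-averaging. You have correctly located the crux (the seam verification and fibre count) and the place where [W4] enters, so this is the same argument, just stated in plan form.
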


\begin{rmk}
The condition [W4] is equivalent to $l_0 \le \lambda'_1 - {\rm min} \{\lambda'_2, \sum_{j \ge 2}(\lambda'_j - \mu'_j)\} - \mu'_1 + 1$ because the left-arm of $\bm{v}$ consists of $\lambda'_1 - {\rm min}\{\lambda'_2, \sum_{i \ge 2}(\lambda'_i - \mu'_i)\} - \mu'_1 + 1$ boxes. Similarly, the condition [W4] is equivalent to $l_1 \le \lambda_1 - {\rm min}\{\lambda_2, \sum_{i \ge 2}(\lambda_i - \mu_i)\} - \mu_1 + 1$. \\
\end{rmk}

\begin{proof}
Any semi-standard Young tableau of shape $[\alpha|_{l_0} \lambda/\mu |_{l_1} \beta]$ can be written by the form of $[A|_{l_0} L |_{l_1} B]$ for some $A = (a_{ij}) \in \SSYT(\alpha)$, $B = (b_{ij}) \in \SSYT(\beta)$, $L = (l_{ij}) \in \SSYT(\lambda/\mu)$. 

For $[A|_{l_0} L |_{l_1} B] \in \SSYT([\alpha|_{l_0} \lambda/\mu |_{l_1} \beta])$, we show that $[A|_{l_0} \Rect(L) |_{l_1} B]$ is a semi-standard Young tableau. 
It suffices to check the condition of semi-standard Young tableau on the two edges which paste $A$ with $L$, and $B$ with $L$. 

\begin{center}
\begin{tikzpicture}[baseline=(current bounding box.center), line width=2pt]
\ytableausetup{boxsize=1.4em}
\node at (0,0){ 
~\begin{ytableau}
\none & \none & \none & \none & \none & \none & \none & \ast \\
\none & \none & \none & \none & \none & \ast & \ast & \ast \\
\none & \none & \none & \none & *(black!10){\ast} & *(black!10){\ast} & *(black!10){\ast} \\
\none & \none & \none & *(black!10){\ast} \\
\none & \none & *(black!10){\ast} \\
\ast & \ast & *(black!10){\ast} \\
\ast
\end{ytableau}~};
\draw [opacity = 0.4] (0.62, 0.85) --(1.75, 0.85);
\draw [opacity = 0.4] (-1.05, -0.82) --(-1.05, -1.41);
\node[rotate=90] at (0.9, 0.85) {$\bm{>}$};
\node[rotate=90] at (1.4, 0.85) {$\bm{>}$};
\node at (-1.05, -1.1) {$\bm{\le}$};
\end{tikzpicture}

\end{center}

Let $\nu$ be the shape of $\Rect(L)$.
For $1 \le i \le \lambda'_1 - {\rm min}\{\lambda'_2, \sum_{i \ge 2}(\lambda'_i - \mu'_i)\} - \mu'_1 + 1$, each entry in the box $(\nu'_1 - i + 1, 1)$ of $\Rect(L)$ is $l_{\lambda'_1 - i + 1, 1}$. This is because the labeled entry in $\phi_T(L)$ originated from $l_{\lambda'_1 - i + 1, 1}$ is placed in $(\nu'_1 - i + 1, 1)$ in $\Rect(\phi_T(L))$  by Lemma \ref{skew_box_lem}, and we can recover the entry $l_{\lambda'_1 - i + 1\ 1}$ by $\phi_T^{-1}$ from the entry in $\Rect(\phi_T(L)) = \phi_T(\Rect(L))$ by Proposition \ref{rect_phi}. By the condition $l_0 \le \lambda'_1 - {\rm min}\{\lambda'_2, \sum_{i \ge 2}(\lambda'_i - \mu'_i)\} - \mu'_1 + 1$, especially for $1 \le i \le l_0$, each entry in the box $(\nu'_1 - i + 1, 1)$ in $\Rect(L)$ is $l_{\lambda'_1 - i + 1, 1}$. 
Each $l_{\lambda'_1 - i + 1, 1}$ is equal to or larger than $a_{i \alpha'_1}$ because $l_{\lambda'_1 - i +1, 1}$ is adjacent to $a_{i \alpha'_1}$ by the right edge of its box in $[A|_{l_0} L |_{l_1} B] \in \SSYT([\alpha|_{l_0} \lambda/\mu |_{l_1} \beta])$. 
Therefore, the entry in the box $(\nu'_1 - i + 1, 1)$ of ${\rm Rect}(L)$ is larger than $a_{i\alpha'_1}$. It concludes that the condition of semi-standard Young tableau on the vertical edge which pasting $A$ and $\Rect(L)$ is satisfied. 
We can prove that the condition of semi-standard Young tableau on the horizontal edge which past $B$ and $\Rect(L)$ is satisfied in the same way  as the case of the vertical edge which past $A$ and $\Rect(L)$. 
Therefore, we have that $[A|_{l_0} \Rect(L) |_{l_1} B]$ is a semi-standard Young tableau. 

Conversely, for any $[A|_{l_0} M |_{l_1} B] \in \SSYT([\alpha|_{l_0} \nu|_{l_1} \beta])$ and $L = (l_{ij}) \in \SSYT(\lambda/\mu)$ with $\Rect(L) = M$, $[A|_{l_0} L |_{l_1} B]$ is a semi-standard Young tableau. It is because that each $l_{\lambda'_1 - i + 1, 1}$ is equal to or larger than $a_{i \alpha'_1}$ since $l_{\lambda'_1 - i +1, 1}$ is adjacent to $a_{i \alpha'_1}$ by the right edge of its box, and each $l_{1\ \lambda_1 - j + 1}$ is strictly larger than $b_{\beta_1 j}$ since $l_{1, \lambda_1 - j +1}$ is adjacent to $b_{\beta_1 j}$ by the top edge of its box in $[A|_{l_0} M |_{l_1} B]$. Combining this and $[A|_{l_0} M |_{l_1} B] \in \SSYT([\alpha|_{l_0} \nu |_{l_1} \beta])$, we have that $[A|_{l_0} L |_{l_1} B]$ is a semi-standard Young tableau.
Therefore, for the map 
\begin{align*}
{\Rect}_{[\alpha|_{l_0} \lambda/\mu |_{l_1} \beta]} :\  
&\SSYT([\alpha|_{l_0} \lambda/\mu |_{l_1} \beta]) && \to && \bigcup_{\nu \in\mathcal{G}(\lambda/\mu)}\SSYT([\alpha|_{l_0} \nu |_{l_1} \beta])\\
&\hspace{3em}\rotatebox{90}{$\in$}&& &&\hspace{3em}\rotatebox{90}{$\in$}
\\
&[A|_{l_0} L|_{l_1} B]&& \mapsto &&[A|_{l_0} {\rm Rect}(L) |_{l_1} B] ,
\end{align*}
we have that 
$$\#\{W_1 \in \SSYT([\alpha|_{l_0} \lambda/\mu |_{l_1} \beta])\ |\ {\Rect}_{[\alpha|_{l_0} \lambda/\mu |_{l_1} \beta]} (W_1) = W_0  \} = c_{\mu\nu}^\lambda$$
for any $W_0 \in \SSYT([\alpha|_{l_0} \nu |_{l_1} \beta])$ by Theorem \ref{LRrule2}. 
Also it is clear that 
\begin{align*}
[A|_{l_0} L|_{l_1} B]^{[\bm{a}|_{l_0} \bm{v}|_{l_1} \bm{b}]} 
&= A^{\bm{a}} L^{\bm{v}} B^{\bm{b}}\\
&= A^{\bm{a}} \Rect(L)^{\bm{v}_L} B^{\bm{b}}
\end{align*}
by the definition. 
For any $\bm{u}_\nu(\bm{v}) \in U_\nu(\bm{v})$ and $L \in \SSYT(\lambda/\mu)$ such that the shape of $\Rect(L)$ is $\nu$, recall that
$$\sum_{\Sym(B(\bm{v}))} \frac{1}{{\Rect(L)}^{\bm{v}_L} }
=\sum_{\Sym(B(\bm{v}))} \frac{1}{{\Rect(L)}^{{\bm{u}}_\nu(\bm{v})}}.$$
From the discussion above, we have the following calculation, completing the proof.
\begin{align*}
\sum_{\Sym(B(\bm{v}))}& \zeta_{[\alpha|_{l_0} \lambda/\mu|_{l_1} \beta]}([\bm{a}|_{l_0} \bm{v}_L|_{l_1} \bm{b}])\\
&=\sum_{\Sym(B(\bm{v}))}\sum_{W_1 \in \rm{SSYT}([\alpha|_{l_0} \lambda/\mu|_{l_1} \beta])} \frac{1}{W_1^{[\bm{a}|_{l_0} \bm{v}|_{l_1} \bm{b}]}}\\
&=\sum_{\Sym(B(\bm{v}))}\sum_{[A|_{l_0} L|_{l_1} B] \in \SSYT([\alpha|_{l_0} \lambda/\mu|_{l_1} \beta])} \frac{1}{[A|_{l_0} L|_{l_1} B]^{[\bm{a}|_{l_0} \bm{v}|_{l_1} \bm{b}]}}\\
&=\sum_{\Sym(B(\bm{v}))}\sum_{[A|_{l_0} L|_{l_1} B] \in \rm{SSYT}([\alpha|_{l_0} \lambda/\mu|_{l_1} \beta])} \frac{1}{A^{\bm{a}} {\rm Rect}(L)^{\bm{v}_L} B^{\bm{b}}}\\
&=\sum_{\Sym(B(\bm{v}))}\sum_{[A|_{l_0} L|_{l_1} B] \in \SSYT([\alpha|_{l_0} \lambda/\mu|_{l_1} \beta])} \frac{1}{A^{\bm{a}} {\rm Rect}(L)^{\bm{u}_{\shape(\Rect(L))}} B^{\bm{b}}}\\
&=\sum_{\Sym(B(\bm{v}))}\sum_{\nu \in \mathcal{G}(\lambda/\mu)}\sum_{[A|_{l_0} M |_{l_1} B] \in \SSYT([\alpha|_{l_0} \nu |_{l_1} \beta])} c_{\mu\nu}^\lambda \frac{1}{[A|_{l_0} M |_{l_1} B]^{[\bm{a}|_{l_0} \bm{u}_\nu(\bm{v})|_{l_1} \bm{b}]}}\\
&=\sum_{\Sym(B(\bm{v}))}\sum_{\nu \in \mathcal{G}(\lambda/\mu)}\sum_{W_0 \in \SSYT([\alpha|_{l_0} \nu |_{l_1} \beta])} c_{\mu\nu}^\lambda \frac{1}{W_0^{[\bm{a}|_{l_0} \bm{u}_\nu(\bm{v})|_{l_1} \bm{b}]}}\\
&=\sum_{\Sym(B(\bm{v}))}\sum_{\nu \in \mathcal{G}(\lambda/\mu)} c_{\mu\nu}^\lambda \zeta_\nu([\bm{a}|_{l_0} \bm{u}_\nu(\bm{v})|_{l_1} \bm{b}]).
\end{align*}
Thus, we obtain Theorem \ref{wing}.
\end{proof}

\begin{ex}

For $\lambda/\mu =(5,2,1,1)/(2,1)$, $\alpha = (2, 1)$, $\beta = (3, 3)/(2)$, $l_0 = 1, l_1 = 2$, $\bm{v} \in T(\lambda/\mu, \C)$, $\bm{a} \in T(\alpha, \C)$ and $\bm{b} \in T(\beta, \C)$, the following equality is derived from Theorem \ref{wing}. 
For simplicity, we replace all variables in $B(\bm{v})$ with the same variables $v_0$ and replace the double subscripts of other variables $v_{ij}$ with single subscripts.
\ytableausetup{boxsize=1.4em}
\begin{align*}
& \zeta_{[\alpha|_1 \lambda/\mu |_2 \beta]} 
\left(\vcenter{\hbox{
\begin{tikzpicture}[baseline=(current bounding box.center), line width=2pt]
\node at (0,0){ 
~\begin{ytableau}
\none & \none & \none & \none & \none & \none & \none & b_{13}\\
\none & \none & \none & \none & \none & b_{21} & b_{22} & b_{23} \\
\none & \none & \none & \none & *(black!10){v_1} & *(black!10){v_2} & *(black!10){v_3} \\
\none & \none & \none & *(black!10){v_0} \\
\none & \none & *(black!10){v_0} \\
a_{11} & a_{12} & *(black!10){v_4} \\
a_{21}
\end{ytableau}~};
\draw (0.62, 0.85) --(1.77, 0.85);
\draw (-1.05, -0.82) --(-1.05, -1.43);
\end{tikzpicture}
}}\right)\\
& = \zeta_{[\alpha|_1 (5,1)|_2 \beta]}
\left(\vcenter{\hbox{
\begin{tikzpicture}[baseline=(current bounding box.center), line width=2pt]
\node at (0,0){ 
~\begin{ytableau}
\none & \none & \none & \none & \none & \none & \none & b_{13}\\
\none & \none & \none & \none & \none & b_{21} & b_{22} & b_{23} \\
\none & \none & *(black!10){v_0} & *(black!10){v_0} & *(black!10){v_1} & *(black!10){v_2} & *(black!10){v_3} \\
a_{11} & a_{12} & *(black!10){v_4} \\
a_{21}
\end{ytableau}~};
\draw (0.6, 0.3) --(1.76, 0.3);
\draw (-1.05, -0.28) --(-1.05, -0.84);
\end{tikzpicture}
}}\right)
+ \zeta_{[\alpha |_1 (4,2) |_2 \beta]}
\left(\vcenter{\hbox{
\begin{tikzpicture}[baseline=(current bounding box.center), line width=2pt]
\node at (0,0){ 
~\begin{ytableau}
\none & \none & \none & \none & \none & \none & b_{13}\\
\none & \none & \none & \none & b_{21} & b_{22} & b_{23} \\
\none & \none & *(black!10)v_0 & *(black!10)v_1 & *(black!10)v_2 & *(black!10)v_3 \\
a_{11} & a_{12} & *(black!10)v_4 & *(black!10)v_0 \\
a_{21}
\end{ytableau}~};
\draw (0.35, 0.3) --(1.47, 0.3);
\draw (-0.78, -0.28) --(-0.78, -0.86);
\end{tikzpicture}
}}\right)\\
& + 2\ \zeta_{[\alpha |_1 (4,1,1) |_2 \beta]}
\left(\vcenter{\hbox{
\begin{tikzpicture}[baseline=(current bounding box.center), line width=2pt]
\node at (0,0){ 
~\begin{ytableau}
\none & \none & \none & \none & \none & \none & b_{13}\\
\none & \none & \none & \none & b_{21} & b_{22} & b_{23} \\
\none & \none & *(black!10)v_0 & *(black!10)v_1 & *(black!10)v_2 & *(black!10)v_3 \\
\none & \none & *(black!10)v_0 \\
a_{11} & a_{12} & *(black!10)v_4 \\
a_{21}
\end{ytableau}~};
\draw (0.35, 0.55) --(1.5, 0.55);
\draw (-0.78, -0.56) --(-0.78, -1.14);
\end{tikzpicture}
}}\right)
+ \zeta_{[\alpha|_1 (3,2,1) |_2 \beta]}
\left(\vcenter{\hbox{
\begin{tikzpicture}[baseline=(current bounding box.center), line width=2pt]
\node at (0,0){ 
~\begin{ytableau}
\none & \none & \none & \none & \none & b_{13}\\
\none & \none & \none & b_{21} & b_{22} & b_{23} \\
\none & \none & *(black!10)v_1 & *(black!10)v_2 & *(black!10)v_3 \\
\none & \none & *(black!10)v_0 & *(black!10)v_0 \\
a_{11} & a_{12} & *(black!10)v_4 \\
a_{21}
\end{ytableau}~};
\draw (0.08, 0.53) --(1.2, 0.53);
\draw (-0.45, -0.56) --(-0.45, -1.15);
\end{tikzpicture}
}}\right)\\
& + \zeta_{[\alpha|_1 (3,1,1,1) |_2 \beta]}
\left(\vcenter{\hbox{
\begin{tikzpicture}[baseline=(current bounding box.center), line width=2pt]
\node at (0,0){ 
~\begin{ytableau}
\none & \none & \none & \none & \none & b_{13}\\
\none & \none & \none & b_{21} & b_{22} & b_{23} \\
\none & \none & *(black!10)v_1 & *(black!10)v_2 & *(black!10)v_3 \\
\none & \none & *(black!10)v_0 \\
\none & \none & *(black!10)v_0 \\
a_{11} & a_{12} & *(black!10)v_4 \\
a_{21}
\end{ytableau}~};
\draw (0.095, 0.84) --(1.2, 0.84);
\draw (-0.5, -0.85) --(-0.5, -1.42);
\end{tikzpicture}
}}\right).
\end{align*}

\end{ex}

\section*{Acknowledgment}
The author would like to express her sincere gratitude to her supervisor, Professor Yasuo Ohno, for introducing her to multiple zeta values, and giving her his invaluable guidance on both the subject and the attitude towards research. The author appreciates Professor Maki Nakasuji for her numerous helpful comments from the initial stage of this work, and for suggesting the improvement of the main theorem in Section 4. The author is also grateful to Professors Wataru Takeda, and Hideki Murahara for valuable discussions on the research topic in this article, and for valuable comments about this draft. The author also appreciates Professor Shin-ichiro Seki for reading this draft and giving her many valuable comments. 
The author was partly supported by AIE-WISE Program for AI Electronics by Tohoku University.

\vspace{3mm}

\end{document}